\documentclass[twocolumn]{my-autart}

\usepackage{color}
\usepackage{booktabs, multirow, makecell}
\usepackage{bm, comment}
\usepackage[lofdepth,lotdepth]{subfig}
\usepackage{amsmath,amsfonts,amssymb}
\usepackage{mathabx,fixmath,algorithm, algorithmic}
\usepackage{pstool}
\usepackage{psfrag}
\usepackage{hyperref}
\usepackage{cite}

\def\x{\mathbold{x}}
\def\k{\mathbold{k}}

\def\I{\mathbold{I}}
\def\K{\mathbold{K}}
\def\Reals{\mathbf{R}}

\def\Q{\mathbf{Q}}

\def\y{\mathbold{y}}

\def\transp{\mathsf{T}}

\usepackage{theorem}

\newtheorem{theorem}{Theorem}
\newtheorem{corollary}{Corollary}
\newtheorem{lemma}{Lemma}

\newtheorem{remark}{Remark}
\newtheorem{assumption}{Assumption}

\newenvironment{proof}{\emph{Proof:}}{\hfill $\blacksquare$}

\newcommand{\ALGO}{\textsf{AGP-UCB}}

\newcommand{\rev}[1]{{\color{black} #1}}
\newcommand{\arev}[1]{{\color{black} #1}}

\newcommand{\cref}[1]{\eqref{#1}}

\allowdisplaybreaks

\usepackage{lipsum}
\usepackage{amsfonts}
\usepackage{graphicx}
\usepackage{epstopdf}
\usepackage{algorithmic}
\ifpdf
  \DeclareGraphicsExtensions{.eps,.pdf,.png,.jpg}
\else
  \DeclareGraphicsExtensions{.eps}
\fi

\usepackage{ifthen}
\newboolean{showcomments}
\setboolean{showcomments}{true}
\newcommand{\andrey}[1]{\ifthenelse{\boolean{showcomments}}
	{ \textcolor{blue}{(Andrey says:  #1)}}{}}
	
\def\endstatement{\hfill$\diamond$}
	
\setlength{\parskip}{1ex plus 0.5ex minus 0.2ex}
\begin{document}

\begin{frontmatter}

\title{Personalized Optimization with User's Feedback\thanksref{footnoteinfo}} 

\thanks[footnoteinfo]{This paper was not presented at any conferences. Corresponding author A. Simonetto. Tel. +353-87-3583843. 
Fax +353-1-8269903.}

\author[IBM]{Andrea Simonetto}\ead{andrea.simonetto@ibm.com}, 
\author[UCB]{Emiliano Dall'Anese}\ead{emiliano.dallanese@colorado.edu},    
\author[Amazon]{Julien Monteil}\ead{jul@amazon.com},               
\author[NREL]{Andrey Bernstein}\ead{andrey.bernstein@nrel.gov}  

\address[IBM]{IBM Research Ireland, Dublin, Ireland}  
\address[UCB]{University of Colorado Boulder, Boulder, CO, USA}             %
\address[Amazon]{Amazon Web Services, One Burlington Plaza, Burlington Road, Dublin 4, Ireland} 
\address[NREL]{National Renewable Energy Laboratory (NREL), Golden, CO, USA}        

\begin{keyword}                           
Online optimization; upper-confidence bounds; Gaussian processes; machine learning; cyber-physical systems               
\end{keyword}                             








\vskip-2mm
\begin{abstract}
This paper develops an online algorithm to solve a time-varying optimization problem with an objective that comprises a known time-varying cost and an unknown function. This problem structure arises in a number of engineering systems and cyber-physical systems where the known function captures time-varying engineering costs, and the unknown function models user's satisfaction; in this context, the objective is to strike a balance between given performance metrics and user's satisfaction. Key challenges related to the problem at hand are related to (1) the time variability of the problem, and (2) the fact that learning of the user's utility function is performed concurrently with the execution of the online algorithm. This paper leverages Gaussian processes (GP) to learn the unknown cost function from noisy functional evaluation and build pertinent upper confidence bounds. Using the GP formalism, the paper then advocates time-varying optimization tools to design an online algorithm that exhibits tracking of the oracle-based optimal trajectory within an error ball, while learning the user's satisfaction function with no-regret. The algorithmic steps are inexact, to account for possible limited computational budgets or real-time implementation considerations. Numerical examples are illustrated based on a problem related to vehicle platooning. 
\end{abstract}

\end{frontmatter}

\section{Introduction}
\label{sec:introduction}


Optimization is ubiquitous in engineering systems and cyber-physical systems including smart homes, energy grids, and intelligent transportation systems. As an example for the latter, optimization toolboxes are advocated for intelligent traffic light management and for congestion-aware systems for highways, where a networked control system could control vehicles and form a platoon optimized for lowering fuel emission and increasing vehicle spatial density. In many applications involving (and affecting) end-users, optimization problems are formulated  with the objective of striking a balance between given engineered performance metrics and user's satisfaction. Engineered metrics may include, e.g., operational cost and efficiency; on the other hand, metrics to be addressed for the users may be related to comfort (e.g., temperature in a home or building), perceived safety (e.g., distance from preceding  vehicles while driving on a freeway), or simply preferences (e.g., taking a route with the least number of traffic lights).

While engineered performance goals may be synthetized based on well-defined metrics emerging from physical models or control structures, the utility function to be optimized for the users is primarily based on synthetic models postulated for comfort and satisfaction; these synthetic functions are constructed based on generic welfare models, averaged or statistical human-perception models estimated over a sufficiently large population of users' responses, or just simplified mathematical models that make the problem tractable. However, oftentimes, these strategies do not lead to meaningful optimization outcomes. \rev{Synthetic models of users' utilities are difficult to obtain for the associated cost and time of human studies, the data is therefore scarce, biased, and there is a lack of comprehensive theoretical models, see e.g.~\cite{Lepri2017,Bourgin2019}. In addition, (1) generic welfare or averaged  models  may not fully capture preferences, comfort, or satisfaction of individual users, and (2) synthetic utility functions may bear no relevance to a number of individuals and users, that is \emph{personalization} is key (as well-known, e.g., in medicine~\cite{Spaulding2011}).}

This paper investigates time-varying optimization problems with an objective that comprises a \emph{known} time-varying cost and an \emph{unknown} utility function. The known function captures time-varying engineering costs, where time variability emerges from underlying dynamics of the systems.  The second term pertains to the users, and models personalized utility functions; these functions  are in general unknown, and they must be learned concurrently with the solution of the optimization problem. \rev{Learning unknown  utility functions for each  user is key to overcome the limitations of synthetic models, and to fully capture user satisfaction from possibly limited data. }

Typically, three separate timescales are considered in this setting: \emph{(i)} temporal variations of the engineering cost; \emph{(ii)} learning rates for the user's utility function; and, \emph{(iii)} convergence rate of the optimization algorithm. In the paper, we compress these timescales by advocating time-varying optimization  tools to design an online algorithm that tracks time-varying optimal decision variables emerging from time-varying engineering costs within an error bound, while concurrently learning the user's satisfaction with no-regret. We  model the user's satisfaction as a Gaussian process (GP) and learn its parameters from user's feedback~\cite{Srinivas2012,Bogunovic2016}; in particular, feedback is in the form  of noisy functional evaluations of the (unknown) utility function. 
The user's satisfaction is learned by implementing the approximate decisions and measuring the user's feedback to update the user's GP model. 

Overall, the main contributions of the paper are as follows: 

\noindent $\bullet$ The paper extends the works~\cite{Srinivas2012,Bogunovic2016} on GP\arev{s} to handle a sum of a convex engineering cost and an unknown user utility function by considering \emph{approximate} online algorithms; the term ``approximate'' refers to the fact that intermediate optimization sub-problems in the algorithmic steps are not assumed to be solved to convergence due to underlying complexity limits. This is key in many time-varying optimization applications and increasingly important for large-scale  systems~\cite{Hours2014,Hauswirth2017,DallAnese2016,Paternain2018,SPM,Simonetto2020}.

\noindent $\bullet$ We devise an \emph{approximate upper confidence bound algorithm in compact continuous spaces} and provide its regret analysis to solve the formulated problem. As in~\cite{Srinivas2012}, for the time-invariant and squared exponential kernel case we recover a cumulative regret of the form $O^*(\sqrt{d T (\log T)^{d+1}})$, where $d$ is the dimension of the problem and $T$ is horizon\footnote{\hskip-2mm The notation $O^*$ means a big-$O$ result up to polylog factors.}. As in~\cite{Bogunovic2016}, the time-variation of the cost yields an extra $O(T)$ term in the cumulative regret. We explicitly show how the regret depends on the convergence rate of the \arev{optimization algorithm that is used}, and how it is dependent on the choice of the kernel.

\noindent $\bullet$ We further detail the regret analysis to the case where we use a projected gradient method to compute the approximate optimizers of the upper confidence bound algorithm, and we extend the analysis to vanishing cost changes.

\arev{The performance of the proposed algorithm is then demonstrated in a numerical example derived from vehicle platooning, where the inter-vehicle distances are computed in real-time based on both the engineering and user's comfort perspectives.}

Incorporating user's satisfaction in the decision-making process is not a new concept\rev{~\cite{Kahneman1979}}; yet, it is starting to play a major role in emerging data-driven optimization and control paradigms, especially within the context of cyber-physical-social systems or cyber-physical-and-human systems. For example,~\cite{Bae2018} employs control-theoretic techniques to model human behavior and drive systems to suitable working conditions. In this context, human behavior has been captured as a stochastic process~\cite{Pentland1999} and discrete decision models~\cite{McFadden2000}, among others. 

User's satisfaction and preferences have been modelled as a GP in the machine learning community; see, e.g.,~\cite{Chu2005,Houlsby2012}. For an account of Gaussian processes we refer to ~\cite{Rasmussen2006}, while for their use in control we refer the reader to~\cite{Berkenkamp2016,Nghiem2017,Jain2018,Liu2018a}. User's satisfaction and comfort has been taken into account from a control perspective in, e.g., control systems for houses, electric vehicles, and routing~\cite{Oldewurtel2012,Chatupromwong2012,Quercia2014}.

\rev{In all the works mentioned above, the users' functions were modelled based on synthetic models or they were learned a priori (i.e., before the execution of the algorithm). Preferences  are also  learned among a finite number of possibilities. In this paper, we incorporate learning modules in the online optimization algorithm (with the learning and the optimization tasks implemented in a concurrent timescale), we bypass assumptions on discrete preference sets, and we do not utilize synthetic models. The proposed strategy enables fine-grained personalization. }

\rev{With respect to works that use GPs in control and reinforcement learning (RL)\arev{~\cite{Deisenroth2015,Liu2018a,Ghavamzadeh2015,Deisenroth2010,Pinsler2018}}, they typically use GPs to model unknown system dynamics and state transitions; in this paper,  we do not consider system dynamics and state transitions,  since we focus on \arev{a} purely optimization strategy. \arev{Even when the reward function is modeled as \arev{a} GP, the RL machinery is more complex than what we present here; RL-based methods also implicitly assume a timescale separation between optimization and learning, which is not the case in this paper.}}

The techniques and ideas that are expressed in this paper are related to inverse reinforcement learning~\cite{Abbeel2004}\rev{-\cite{Levine2011}} \rev{(even though in those settings one still needs to know what is a ``good'' action via demonstration)}, restless bandit problems~\cite{Slivkins2008} (even though we do not use their machinery and they are typically in discrete spaces, while we are in compact continuous spaces), and (partially) to socially-guided machine learning~\cite{Breazeal2008}. \rev{Ideas in this \arev{paper} are also close in spirit to preference elicitation studies; see, e.g.,~\cite{Huber1993,Blum2004,Weernink2014}}.

Online convex optimization and learning are active research areas in the control and signal processing communities, see e.g.~\cite{Hosseini2016,Ma2017,Nedic2017,Cao2018,Zhou2018,Koppel2018,Shahrampour2018,ElChamie2019,Akbari2019,Epperlein2019}; broadly, this paper aims at expanding this line of work by cross-fertilizing time-varying optimization, online learning, and Gaussian Processes by incorporating user's feedback, and by providing an illustrative application in vehicle control.

\smallskip

{\bf Organization.} The remainder of the paper is organized as follows. Section~\ref{sec:form} presents the mathematical formulation and the proposed approach using an approximate upper confidence bound algorithm. The convergence properties of said algorithm are analyzed in Section~\ref{sec:rb}, along with two specifications of the algorithm in cases where we use a projected gradient method and the cost changes are vanishing. In Section~\ref{sec:num}, we report numerical examples. Proofs of all the results are in the Appendix.  

\section{Formulation and Approach}\label{sec:form}

Consider a decision variable $\x \in D \subset \Reals^N$, and a possibly time-varying objective function $f(\x; t): \Reals^{N} \times \Reals_{+} \to \Reals$ that  has to be maximized. The function $f(\x; t)$, where $t>0$ represents time, is given by the sum of two terms: a concave engineering \rev{welfare} function $V(\x; t): \Reals^{N} \times \Reals_{+} \to \Reals$, and  a user's satisfaction function $U(\x): \Reals^{N} \to \Reals$. The function $V(\x; t)$ is known (or can be easily evaluated at time $t$) \rev{and represents a deterministic function that
evaluates system rewards or engineering costs}; on the other hand, $U(\x)$ is unknown and has to be learned. Accordingly, the problem to be solved amounts to:
\begin{equation}\label{eq:tvp}
\textsf{P}(t): \,\, \max_{\x \in D} f(\x; t) = V(\x;t) + U(\x), \,\, \textrm{ for all } t\geq 0,
\end{equation}
that is, the objective is \arev{to} find an optimal decision $\x^*(t)$ for each time $t$. Here, we implicitly assume that the user's satisfaction function $U(\x)$ does not change in time; however, slow changes of the function do not affect reasoning and technical approach. Furthermore, $U(\x)$ is not necessarily a concave function, thus rendering $\textsf{P}(t)$ a challenging problem even in the static setting. 
The structure of~\eqref{eq:tvp} naturally suggests  that optimal decisions strike a balance between design choices (which may be time-varying, e.g., in tracking problems) and user's preferences, which are often slowly varying and not known beforehand. 

As a first step, consider sampling the problem~\eqref{eq:tvp} at discrete time instances $t_k$, $k=1,2,\ldots$, with $h$ a given sampling period. This leads to a sequence of time-invariant problems as:
\begin{equation}\label{eq:tivp}
\textsf{P}_k:\quad \max_{\x \in D} f(\x; t_k) = V(\x;t_k) + U(\x),
\end{equation}
which we want to solve approximately within the sampling period $h$ to generate a sequence of approximate optimizers $\{\x_k\}_{k\in\mathbf{N}}$ (one for each problem $\textsf{P}_k$) that eventually converges to an optimal decision trajectory $\{\x^*_k\}_{k\in\mathbf{N}}$ up to a bounded error. When only the known engineering \rev{welfare} function $V(\x; t)$ is considered, this tracking problem  has been considered in various prior works; see,  e.g.,~\cite{Paper3,Fazlyab2016,DallAnese2016,Bernstein19,Dixit2018,SPM,Simonetto2020} and pertinent references therein. A key difference in the  setting proposed in this paper is that we construct approximate optimizers  \emph{concurrently} with the learning of the (unknown) user's function $U(\x)$. The main operating principles of the algorithm to  be explained shortly are, qualitatively, as follows: (i) at time $t_k$,  an approximate optimizer $\x_{k}$ is computed based on a partial knowledge of $U(\x)$; (ii) the optimizer is  implemented, and it generates some ``feedback'' from the user in the form of, for example, $y_k = U(\x_{k}) + \varepsilon$, where $\varepsilon$ is noise; and, (iii) $y_k$ is collected and utilized to ``refine'' the knowledge of $U(\x)$. At time $t_{k+1}$, the process is then repeated. 

To this end, the paper leverages a Gaussian process (GP) model for the unknown user function $U(\x)$. Such non-parametric model is advantageous in the present setting because of (1) the simplicity of the online updates of both mean and covariance; (2) the inherent ability to handle asynchronous and intermittent updates (which is an important feature in user's feedback systems); and, (3) the implicit and smooth handling of measurement (i.e., feedback) noise. Accordingly, let $U(\x)$ be specified by a GP with mean function $\mu(\x) := \mathbf{E}[U(\x)]$ and covariance (or kernel) function $k(\x, \x') := \mathbf{E}[(U(\x) - \mu(\x))(U(\x')- \mu(\x'))]$. We assume bounded variance; i.e., $k(\x,\x) \leq 1, \x \in D$. For GPs not conditioned on data, we assume without loss of generality that $\mu \equiv 0$, i.e., GP$(0,k(\x,\x'))$. \rev{In this context, the user function $U(\x)$ is assumed to be a sample path of GP$(0,k(\x,\x'))$}. Let $A_n = \{\x_1\in D, \ldots, \x_n\in D\}$ be a set of $n$ sample points and let $y_i = U(\x_i) + \varepsilon_i$, $\varepsilon_i \sim N(0, \sigma^2)$ i.i.d. Gaussian noise, be the noisy measurements at the sample points $\x_i$, $i = 1, \ldots,n$. Let $\y_n = [y_1, \ldots, y_n]^\transp$. Then the posterior distribution of $(U(\x) | A_n, \y_n)$ is a GP distribution with mean $\mu_n(\x)$, covariance $k_n(\x,\x')$, and variance $\sigma_n^2(\x)$ given by:
\begin{eqnarray}
\mu_n(\x) &=& \k_n(\x)^\transp(\K_n + \sigma^2 \I_n)^{-1} \y_n \label{eq:mean} \\
k_n(\x,\x') &=& k(\x,\x') \!-\! \k_n(\x)^\transp(\K_n + \sigma^2 \I_n)^{-1}\k_n(\x')
\color{white}{aaaa}\color{black} \label{eq:cov} \\
\sigma_n^2(\x) &=& k_n(\x,\x). \label{eq:var}
\end{eqnarray}
where $\k_n(\x) := [k(\x_1,\x), \ldots, k(\x_n,\x)]^\transp$, and $\K_n$ is the positive definite kernel matrix $[k(\x,\x')]_{\x,\x' \in A_n}$.

With this in place, in order to approximately solve the sequence of optimization problems $\{\textsf{P}_k\}$ (where we remind that the objective function is partially unknown), we utilize an online approximate Gaussian process upper confidence bound (\ALGO) algorithm as described \rev{in the \ALGO\ table in the next page.\\} 

\begin{figure}
\hrule

\hrule

\vspace{.1cm}

\noindent \textbf{\ALGO~algorithm}

\vspace{.1cm}

\hrule 

\vspace{.1cm}

\noindent Initialize $\mu_0(\x)$, $\sigma_0(\x)$ from average user's profiles. Set \rev{the optimization counter $k$ and the data \arev{counter} $n$ to $1$.} 
\vskip1mm

\hrule 
\vskip1mm

\noindent  At each time $t_k$, perform the following steps {\bf[S1]}--{\bf[S3]}: 
\vskip1mm

\noindent {\bf[S1]} Set 
\begin{equation}\label{eq:ucb}
\hat{U}_n(\x) = \mu_{n-1}(\x) + \sqrt{\beta_n} \sigma_{n-1}(\x),
\end{equation}
\rev{The confidence parameter $\beta_n$ is set as in Th.~\ref{th:Regret};}

\vskip1mm
\noindent {\bf[S2]} Find a possibly approximate optimizer:
\begin{equation}\label{eq:optimizer}
\x_{k} \approx \textrm{arg}\max_{\x\in D} \{ V(\x; t_{k}) \!+\! \hat{U}_n(\x)\},
\end{equation}
by running a finite numbers of steps of a given algorithmic scheme, and implement $\x_{k}$;

\vskip1mm
\noindent {\bf[S3]} \rev{Ask user's feedback on $\x_{k}$}: 
\vskip1mm

\hskip.5cm \begin{minipage}{0.4\textwidth}\rev{\textbf{If} (feedback is given): collect
$$
y_{n} = U(\x_{k}) + \varepsilon_n,
$$
and perform a Bayesian update to obtain $\mu_{n}(\x)$ and $\sigma_{n}(\x)$ according to \eqref{eq:mean}-\eqref{eq:var};  set $n$ to $n$+$1$.}
\end{minipage}

\vskip1mm

\hskip.5cm \begin{minipage}{0.4\textwidth} \rev{\textbf{Else} (if the feedback $y_{n}$ is not received): 
keep $\mu_{n}(\x)$ and $\sigma_{n}(\x)$ at their current values.} \end{minipage}

\vspace{.1cm}

\hrule 

\hrule 
\end{figure}

\subsection{\rev{Description of \ALGO.}}

\rev{We now describe the main steps of \ALGO. First of all, notice that there are two indexes $k$ and $n$; $k$ represents the time/optimization counter (i.e., the time index for the execution of the step~\eqref{eq:optimizer}), while $n$ is the data counter (see step [S3] in \ALGO). Time and optimization steps are fused into $k$, since one step of the algorithm is performed at each time. The data counter $n$ does not generally coincide with $k$, since feedback may not be given at each algorithmic step. After initialization,} in~\eqref{eq:ucb}, a proxy for the unknown function $U(\x)$ is built using an upper confidence bound. Upper confidence bound methods 
are popular in stochastic bandit settings; see, e.g.,~\cite{Srinivas2012,Bubeck2012}. The aim of an upper confidence bound method is to trade off exploitation (areas with high mean but low variance) and \arev{exploration} (areas with low mean but high variance). \rev{The upper confidence bound depends on the parameter $\beta_n$ which is set as described in Th.~\ref{th:Regret}. }

Step~\eqref{eq:optimizer} is utilized to find $\x_k$ based on the (current) upper confidence bound \rev{and the new function $V(\cdot; t_k)$}. In~\cite{Srinivas2012}, it is assumed that~\eqref{eq:optimizer} can be solved to optimality; on the other hand, we consider a setting where only an \emph{approximate} optimizer of~\eqref{eq:optimizer} can be obtained at each time $t_k$. In particular, we consider a case where it might not be possible to execute the algorithm until convergence to an optimal solution within a period of time $h$~\cite{Paper3,Fazlyab2016,DallAnese2016,Bernstein19,Dixit2018}; this is the case where, for example, the sampling period $h$ is too short (relatively to the time required to perform an algorithmic step and the number of iterations required to converge) or the problem is computationally demanding. Thus,  
denoting as $\mathcal{M}$ the map of a given algorithmic step (e.g., gradient descent, proximal method, etc.) and assuming that one can perform $N_s$ steps within an interval $h$, $\x_{k}$ is obtained as: 
\begin{equation}\label{eq:met}
\x_{k} = \underbrace{\mathcal{M}\circ \cdots \circ \mathcal{M}}_{N_s \textrm{~compositions}}\circ \varphi_{nk}(\x_{k-1}).
\end{equation}
where $\varphi_{nk}(\x)$ is defined as:
\begin{equation} \label{eq:phi}
\varphi_{nk}(\x):=V(\x; t_{k}) + \mu_{n-1}(\x) + \sqrt{\beta_n} \sigma_{n-1}(\x). 
\end{equation}
For example, if $\mathcal{M}$ is the map of a gradient method, then~\eqref{eq:met} implies that one runs $N_s$ gradient steps\footnote{\arev{The choice of the algorithm will be dictated by Assumption~\ref{as.method} and it will influence the overall performance.}}; this example will be explained shortly in Section~\ref{sec:example_gradient}. See also the works~\cite{Paper3,Fazlyab2016,DallAnese2016,Bernstein19,Dixit2018} 
(and pertinent references therein) for the case of $N_s = 1$. 
From a more utilitarian perspective, this setting allows one to allocate computational resources to solve~\eqref{eq:optimizer} parsimoniously; spending resources to  solve~\eqref{eq:optimizer} to optimality might not provide performance gains since  the user's function is \emph{not known accurately}. \rev{We remark here that not solving~\eqref{eq:optimizer} \arev{to} optimality and still guaranteeing convergence results is one of the key contribution\arev{s} of this paper. }

Finally, step {\bf[S3]} involves the gathering of the user's feedback in the form of a measurement of $U(\x_k)$. This feedback is in general noisy (e.g., it may be collected by sensors or be quantized), and it could be intermittent in the sense that it might not be available at every time step (this explains why we utilize the subscript $k$ for the temporal index and $n$ for the updated of $\hat{U}_n(\x)$; of course, $k = n$ if $y_{n}$ is collected at each time $k$). If the feedback is available, the GP model is updated via~\eqref{eq:mean}-\eqref{eq:cov}, otherwise not. \rev{This is also further explained in Figure~\ref{fig.streams}, where the temporal index $k$ is dictated by the optimizer counter, while the data counter $n$ is dictated by how many feedback data we get from the users. Since $U$ is time-invariant, it does not matter when the feedback is received, as long as \arev{it is} correctly labeled.}


\begin{figure}
\includegraphics[width=0.5\textwidth]{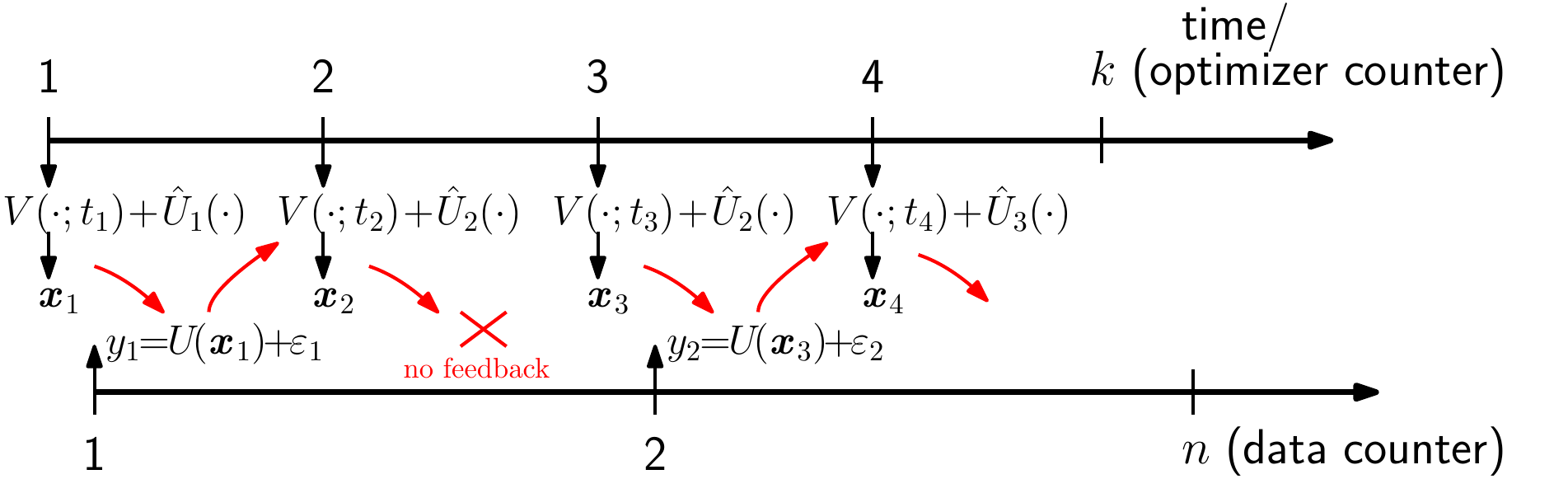}
\caption{\rev{Graphical explanation of \ALGO. At each $t_k$, we compute an approximate $\x_k$ based on the engineering function $V(\cdot; t_k)$ and the user's function $\hat{U}_n$. Based on $\x_k$, we ask feedback to the user, if given: we update $\hat{U}_n$ and increase the data counter, otherwise we keep $\hat{U}_n$ as is.}}
\label{fig.streams}
\end{figure}

\begin{remark}
\rev{\ALGO\ assumes the possibility to acquire the feedback $y_n$ (associated with a given decision variable $\x_k$). Again, feedback may be infrequent, and does not have to happen at every time $k$. Realistic examples of applications include, e.g., a platooning task (see Section~\ref{sec:num}) with sensors in the seat of the vehicle determining stress levels; a personalized medicine example, where the effects of a treatment can be monitored by user's feedback via dedicated mobile app; a building temperature control, where the user's satisfaction can be evaluated by wearables via perspiration levels. } 
\end{remark}

\section{Regret Bounds}\label{sec:rb}

In this section, we establish cumulative regret bounds for the  \ALGO~algorithm, \rev{which \arev{depend} on the amount of feedback we receive}. A critical quantity affecting these bounds is the maximum information gain $\gamma_T$ after $T$ feedback rounds, which is defined as
\begin{equation}
\gamma_T := \max_{A \subset D: |A| = T} \,\,\frac{1}{2}\log|{\bf I} + \sigma^{-2} \K_A|,
\end{equation} 
with $\K_A = [k(\x,\x')]_{\x,\x' \in A}$, and where $A$ represents the set of $T$ points $\x\in D$, which maximizes the expression [cf.~\cite{Srinivas2012}].

The regret bounds are of the form $O^*(\sqrt{T \beta_T \gamma_T}) + O(T)$, where the first term is sub-linear and depends on how fast one can learn the unknown user's profile, while the second term is due to the time drift (i.e., temporal variability) of the design function. These regrets bound resemble~\cite{Srinivas2012} for the time invariant case and \cite{Bogunovic2016} for the time-varying one\footnote{We leave for future research the use of more sophisticated methods that learn, predict, or bound the variations of the cost function and might deliver no-regret results also in the time-varying setting, see~\cite{Besbes2013,Jadbabaie2015}. }. The main proof techniques that we \rev{employ are a combination of} GP regret results, convex analysis, and Kernel ridge regression theory. 

\subsection{Main result}

The following assumptions are imposed.

%
%
\begin{assumption}\label{as.fun}
The function $-V(\x;t)$ is convex and $L$-strongly smooth over $D$, uniformly in $t$.  
\end{assumption}



%
%
\begin{assumption}\label{as.u}
Let $D \subseteq [0, r]^d$ be compact and convex, $d \in \mathbf{N}$, $r>0$. Let the kernel $k(\x,\x')$ satisfy the following high probability bound on the derivatives of the GP sample paths $U$: for some constants $a, b >0$:
\begin{equation*}
\mathbf{Pr}\left\{\sup_{\x \in D} |\partial U/ \partial x_j| > M\right\} \leq a \mathrm{e}^{-(M/b)^2}, \quad j = 1,\ldots,d.
\end{equation*}
\end{assumption}

%
%
\begin{assumption}\label{as.tv}
The changes in time of function $V(\x;t)$ are  bounded, in the sense that at two subsequent sampling times $t_k$ and $t_{k-1}$, the following bound holds
$$
\Delta_k := \max_{\x\in D} |V(\x;t_{k}) - V(\x; t_{k-1})| \leq \Delta.
$$
for a given $\Delta < \infty$.
\end{assumption}

%
%
\begin{assumption}\label{as.method}
Recall the definition of $\varphi_{nk}(\x)$ in \eqref{eq:phi} and let its maximum be $\varphi_{nk}^*$. There exists an algorithm  $\mathcal{M}$ with linear convergence \arev{that, when applied to $\varphi_{nk}(\x_{k-1})$, yields a point $\x_{k}$ for which} 
$$
\varphi_{nk}^* - \varphi_{nk}(\x_{k}) \leq \eta (\varphi_{nk}^* - \varphi_{nk}(\x_{k-1})), \quad \eta<1.
$$ 
\end{assumption}

\begin{assumption}\label{as.behave} The kernel $k(\cdot)$ is not degenerate; the mean and variance are well-behaved, so that can be represented by generalized Fourier series of the kernel basis.
\end{assumption}

Most of the assumptions are standard in either time-varying optimization or Gaussian process analysis; see, e.g., \cite{Srinivas2012,Paper3}. 
From Assumption~\ref{as.fun}, one has that  the following holds for any $\x, \y \in D$ and $t_k$: 
\begin{equation*}
-V(\x; t_k) + V(\y; t_k) \leq -\nabla_{\y} V(\y; t_k)(\x - \y) + L/2 \|\x-\y\|_2^2 . 
\end{equation*}
Further, since $D$ is compact,
one has that: 
\begin{equation}\label{eq:lip}
-V(\x; t_k) + V(\y; t_k) \leq d D_g \|\x - \y\|_{\infty} + L/2 \|\x-\y\|_2^2
\end{equation}
where $D_g$ is defined (uniformly in $t$) as:
\begin{equation}\label{eq:dg}
D_g := \max_{\y \in D} \|-\nabla_{\y} V(\y; t)\|_{\infty} \, .
\end{equation} 

Assumption~\ref{as.u} is standard in the analysis of GPs (see, e.g.,~\cite{Srinivas2012}); it holds true for four-time\arev{s} differentiable stationary kernels, such as the widely-used squared exponential and (some) Mat\'ern kernels, \rev{see~\cite[Eq.~(2.33)]{Azaies2009}, or equivalently~\cite[Theorem~5]{Ghosal2006}}. This assumption is needed when $D$ is compact in order to ensure smoothness of the GP samples \rev{(Cf. Appendix~\ref{ap:ab} for how to compute $a,b$)}. 

Assumption~\ref{as.tv} is required in time-varying settings to bound the temporal variability of the cost function; specifically, it presupposes that the  decision $\x$ yields similar function values for $V$ at two subsequent time (with their difference being upper bounded by $\Delta$). This assumption is also common in online optimization and machine learning; see, e.g.,~\cite{Besbes2013,Jadbabaie2015}.  

\arev{Assumption~\ref{as.method} constraints the possible algorithms that are employed to approximately solve \eqref{eq:optimizer} in {\bf [S2]}. 
T}wo cases are in order based on whether the assumption holds locally around the optimal trajectory (in the paper, the term optimum refers to  the global one) or  globally. In the first case,  results will be valid around the optimal trajectory (provided that the algorithm is started  close enough). In the second case, results will be valid in a general sense. Since the conditions are the same in both cases, the paper will hereafter focus on the second case. In a convex setting, an algorithm  $\mathcal{M}$ with $Q$-linear convergence can be found when the function is strongly convex and strongly smooth;  when the function $-\varphi_{nk}(\x)$ is nonconvex, results are available for special cases. A noteworthy example (which is also explored in the simulation results) is when $-\varphi_{nk}(\x)$ has a Lipschitz-continuous gradient and it satisfies the Polyak-Lojasiewicz (PL) inequality; in this case, the (projected) gradient method has a global linear converge rate and Assumption~\ref{as.method} is satisfied~\cite{Karimi2016}. This case implies that the function $-\varphi_{nk}(\x)$ is invex (i.e., any stationary point must be a global minimizer).
\footnote{Other cases in which Assumption~\ref{as.method} is verified for the general nonconvex case could be found by using recent global convergence analysis of ADMM~\cite{Wang2018,Themelis2018}, and left for future research. Note that if one has a $C/k$ or $C/\sqrt{k}$ method, then one can transform it to a linear converging method by running at least $k>C/\eta$ or $k>(C/\eta)^2$ iterations, respectively.}

%
%
With this in mind, one could substitute Assumption~\ref{as.method} with the following (more restrictive) assumption.

\begin{assumption}\label{as.method.1}
The function $-\varphi_{nk}(\x)$ in \eqref{eq:phi} has a Lipschitz continuous gradient with coefficient $\Theta$ and satisfies the Polyak-Lojasiewicz (PL) inequality as,
\begin{equation}\label{pl:our}
\frac{1}{2}\mathcal{D}(\x, c) \geq \kappa (\varphi_{nk}^*-\varphi_{nk}(\x)), \quad \textrm{with}
\end{equation}
\begin{equation}\label{pl:D}
\mathcal{D}(\x, c) := - 2 c \min_{\y \in D} \left\{ -\nabla_{\x}\varphi_{nk}(\x)^\transp (\y - \x) + \frac{c}{2}\|\y - \x\|^2\right\},
\end{equation}
over $\x \in D$, for some $\kappa >0$, $c>0$, uniformly in time (i.e., $\forall \, k$). 
\end{assumption}

Assumption~\ref{as.method.1} can be also required in a local sense only (around the optimal trajectory), if necessary\footnote{Condition~\eqref{pl:our} imposes that the gradient of the cost function is properly lower bounded. If $D$ is the whole space, $\mathcal{D}$ reduces to $\|\nabla \varphi_{nk}(\x)\|^2$ and the condition becomes of easier interpretation. Condition~\eqref{pl:our} also implies invexity. See~\cite{Karimi2016, Roulet2017} for detailed discussions. }. 

Assumption~\ref{as.behave} is a mild technical assumption needed to determine the learning rates of the proposed method; see also~\cite{Rasmussen2006,Seeger2008,vanderVaart2008,Yang2017}. The non-degeneracy of the kernel holds true for squared exponential kernels~\cite{Zhu1998}. If the assumption is not satisfied, then one would learn a smoother version of $U$. 

\rev{Our theoretical regret analysis will be based on the amount of feedback points $n$ that we receive. From data point $n-1$ to data point $n$, a number of time/optimization steps $k$ may have passed. However,} to simplify the exposition and the notation, hereafter we set  the time index $k$ and \rev{data index} $n$ in \ALGO\ to be the same; that is, $k=n$ (\rev{and we receive feedback at every time $k$/for every approximate decision variable $\x_k$}). This is done without loss of generality, since $k=n$ represents  the worst case  for the regret analysis (\rev{from the data standpoint}); in fact, for $k=n$ the time scales of the variation of $\hat{U}_n$ and the convergence of the optimization algorithm are the same, which causes coupling of the two. For $k\gg n$, the convergence of the optimization algorithm is basically achieved for any $n$ and we are back to timescale separations as in~\cite{Srinivas2012}. \rev{We finally notice that if we have, say, at least $p>0$ optimization/time steps within each data point and at most $q\geq p$, the results here presented will be valid substituting $\eta$ with $\eta^p$ and $\Delta$ with $\Delta (1-\eta^q)/(1-\eta)$. }

\rev{Let then $k = n$, without loss of generality.}

Define the cumulative regret $R_T$ as: 
\begin{equation*}
R_T := \sum_{n=1}^T f^*_n - f(\x_{n};t_n), 
\end{equation*}
where $f^*_n$ is the maximum of function $f(\cdot; t_n)$ at time $t_n$ provided by an oracle\footnote{\rev{The definition of regret here is compatible with the one used in~\cite{Srinivas2012, Bogunovic2016} in the static and time-varying case. However, $R_T$ could be slightly better interpreted as dynamic cumulative regret against a time-varying comparator~\cite{Jadbabaie2015}, or cumulative objective tracking error.
}}.  The following result holds for the regret.


\begin{theorem}[Regret bound]\label{th:Regret}
Let Assumptions~\ref{as.fun}--\ref{as.behave} hold. Pick $\delta \in (0,1)$ and set the parameter $\beta_n$ as
$$
\beta_{n} = 2 \log(2 n^2 \!\pi^2 \!/(3\delta)) + 2 d \log(d n^2 b r \sqrt{\log(4 d a /\delta)}),
$$ 
for $n\geq 1$, where $a,b,d$, and $r$ are the parameters defined in Assumption~\ref{as.u}. Running \ALGO\ with $\beta_{n}$ for a sample $U$ of a GP with mean function zero and covariance function $k(\x,\x')$, \rev{and considering $k=n$}, we obtain a regret bound of $O^*(\sqrt{d T \gamma_T}) + O(T)$ with high probability. In particular, 
\begin{equation}
\mathbf{Pr}\Big\{R_T \leq \sqrt{C_1 T \beta_T \gamma_T} + C_2  + O^*(1)+ G_T \Big\} \geq 1-\delta, 
\end{equation}
where  $C_1 = 8/\log(1+\sigma^{-1})$,
\begin{equation}
C_2 = \frac{2 D_g }{b \sqrt{\log(2 d a /\delta)}}+\frac{2 L}{2 d b^2 {\log(2 d a /\delta)}} + 2,
\end{equation} 
and,
\begin{equation}
G_T := 2 \sum_{n=1}^T \sum_{z=1}^{n-1} \eta^z \Delta_{n-z+1} \leq 2 \Delta \eta T/(1\!-\!\eta).
\end{equation} 
\endstatement
\end{theorem}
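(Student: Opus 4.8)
The plan is to decompose the per-round regret $f^*_n - f(\x_n;t_n)$ into three pieces: (i) a \emph{learning} term coming from the gap between the true function $U$ and its UCB proxy $\hat U_n$, which will be controlled exactly as in~\cite{Srinivas2012} via the information gain $\gamma_T$; (ii) an \emph{optimization} term coming from the fact that $\x_n$ is only an approximate maximizer of $\varphi_{nn}(\cdot)$ rather than an exact one; and (iii) a \emph{time-drift} term coming from the change of $V(\cdot;t)$ between consecutive rounds, which feeds into the optimization error through the warm-start in~\eqref{eq:met}. First I would establish the standard high-probability confidence bound: with $\beta_n$ chosen as in the statement, a union bound over a discretization of $D$ of resolution shrinking like $1/n^2$, together with Assumption~\ref{as.u} on the tail of the sample-path derivatives, gives $|U(\x) - \mu_{n-1}(\x)| \le \sqrt{\beta_n}\,\sigma_{n-1}(\x) + 1/n^2$ (roughly) simultaneously for all $\x\in D$ and all $n\le T$, with probability at least $1-\delta$. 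This is what produces the additive constant $2$ inside $C_2$ and the residual $O^*(1)$ term.

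Next I would handle the optimization and drift terms together. On the good event, $\hat U_n$ is a valid upper bound for $U$ up to the discretization slack, so $\varphi_{nn}(\x) = V(\x;t_n) + \hat U_n(\x)$ is an upper bound for $f(\x;t_n)$ up to that slack, and hence $\varphi_{nn}^* \ge f^*_n - O(1/n^2)$. Then
\[
f^*_n - f(\x_n;t_n) \le \varphi_{nn}^* - f(\x_n;t_n) + O(1/n^2) = \big(\varphi_{nn}^* - \varphi_{nn}(\x_n)\big) + \big(\hat U_n(\x_n) - U(\x_n)\big) + O(1/n^2).
\]
The second bracket is bounded by $2\sqrt{\beta_n}\sigma_{n-1}(\x_n) + O(1/n^2)$ by the confidence bound again, and summing $\sum_n \sqrt{\beta_n}\sigma_{n-1}(\x_n)$ via Cauchy–Schwarz and the standard $\sum_n \sigma_{n-1}^2(\x_n) \le C\gamma_T$ bound yields the $\sqrt{C_1 T\beta_T\gamma_T}$ term. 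The first bracket, the optimization error, is where Assumption~\ref{as.method} enters: one application of $\mathcal{M}$ contracts the suboptimality gap of $\varphi_{nn}$ by $\eta$, so after $N_s$ steps the gap is at most $\eta^{N_s}$ times the gap at the warm start $\x_{n-1}$ (I will take $N_s=1$ for notational cleanliness, as the paper does). The warm-start gap $\varphi_{nn}^* - \varphi_{nn}(\x_{n-1})$ must now be related back to the \emph{previous} round's gap: since $\x_{n-1}$ was produced as an approximate maximizer of $\varphi_{(n-1)(n-1)}$, and the two functions differ only through $V(\cdot;t_n)-V(\cdot;t_{n-1})$ (bounded by $\Delta_n$ in sup-norm by Assumption~\ref{as.tv}), one gets $\varphi_{nn}^* - \varphi_{nn}(\x_{n-1}) \le \eta\big(\varphi_{(n-1)(n-1)}^* - \varphi_{(n-1)(n-1)}(\x_{n-2})\big) + 2\Delta_n$ or similar. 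Unrolling this linear recursion gives exactly the double sum $G_T = 2\sum_{n=1}^T\sum_{z=1}^{n-1}\eta^z \Delta_{n-z+1}$, which is bounded by $2\Delta\eta T/(1-\eta)$ since $\sum_{z\ge1}\eta^z = \eta/(1-\eta)$.

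Finally, I would invoke Assumption~\ref{as.behave} only lightly — it is needed to guarantee the mean and variance iterates stay representable so that the $\sigma_{n-1}$ quantities behave and the $O^*(1)$ bookkeeping is legitimate — and then collect: the learning term gives $\sqrt{C_1 T\beta_T\gamma_T}$, the confidence/discretization constants give $C_2$ (the $D_g$ and $L$ pieces arising from turning the sup-norm drift/discretization bounds into function-value bounds via~\eqref{eq:lip}, i.e. the strong-smoothness inequality, with the $b\sqrt{\log(\cdot)}$ denominators coming from the grid resolution implied by Assumption~\ref{as.u}), and the drift term gives $G_T$. Substituting $\beta_T = O(d\log T)$ and the kernel-dependent bound on $\gamma_T$ then yields the advertised $O^*(\sqrt{dT\gamma_T}) + O(T)$. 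I expect the main obstacle to be step two: correctly propagating the optimization warm-start error across rounds while simultaneously accounting for the fact that the objective $\varphi_{nn}$ itself changes (both because $V$ drifts and because $\hat U_n$ is updated). The bookkeeping of which suboptimality gap is measured against which function, and ensuring the recursion closes cleanly into a geometric series rather than accumulating, is the delicate part — in particular one must be careful that the update $\hat U_{n-1}\to\hat U_n$ does not by itself blow up the warm-start gap, which is why it matters that the confidence intervals are nested/shrinking on the good event.
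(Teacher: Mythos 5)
Your overall architecture matches the paper's: the same three-way decomposition of the instantaneous regret, the same Srinivas-style confidence bounds over a $1/n^2$-resolution discretization, the same Cauchy--Schwarz/information-gain summation producing $\sqrt{C_1 T\beta_T\gamma_T}$, and the same linear recursion on the optimization error unrolled into the geometric double sum $G_T$. The one genuine gap is in how you close that recursion. When you relate the warm-start gap at round $n$ to the gap at round $n-1$, the surrogate objective changes not only through $V(\cdot;t_n)-V(\cdot;t_{n-1})$ (giving the $2\Delta_n$ you account for) but also through the update $\hat U_{n-1}\to\hat U_n$. You flag this and propose to dismiss it on the grounds that ``the confidence intervals are nested/shrinking on the good event'' --- but that is not true in general (a new observation can move the posterior mean in either direction, and $\sqrt{\beta_n}$ grows with $n$), and it is not how the paper resolves it. The paper defines $\ell_n := \max_{\x\in D}|\hat U_n(\x)-\hat U_{n-1}(\x)|$, carries $2\ell_n$ alongside $2\Delta_n$ in the recursion (so $e_n \le \eta\,(e_{n-1}+2\Delta_n+2\ell_n)$), and then needs a separate, nontrivial lemma showing $\sum_n \ell_n = O^*(1)$: it represents the posterior mean and the bias term of the posterior variance as kernel-ridge-regression solutions, expands them in the kernel's eigenfunction basis --- this is exactly where Assumption~\ref{as.behave} does real work, not a ``light'' invocation --- and reads off $|\mu_n-\mu_{n-1}| = O(1/n^2)$ and $|\sigma_n-\sigma_{n-1}| = O(1/n^2)$.

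Relatedly, you misattribute the $O^*(1)$ term in the statement: it does not come from the discretization/confidence residuals (those are the $A_0/n^2+A_1/n^4$ terms, whose sums are absorbed into the constant $C_2$); it is precisely the cumulative effect of the $\ell_n$'s filtered through the $\eta$-recursion, i.e.\ $2\sum_{n}\sum_{z}\eta^z\ell_{n-z+1}\le \frac{2}{1-\eta}\,O^*(1)$. Without the kernel-ridge-regression/eigenfunction lemma your argument has no quantitative control over this term, and the bound does not close.
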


\emph{Proof.} See Appendix~\ref{ap:th12}. 

\smallskip

Theorem~\ref{th:Regret} asserts  that the average regret $R_T/T$ converges to an error bound with high probability. It can be noticed that the error bound is a function of the variability of the function $V(\x; t)$; if  $V(\x; t)$ is time-invariant, then a no-regret result can be recovered. The term  $\sqrt{C_1 T \beta_T \gamma_T} + 2$ in the bound can be found in the result of~\cite[Theorem~2]{Srinivas2012} too; in particular, the term $C_1$ depends only  on the variance of the measurement noise. On the other hand, the term $C_2-2$ is due to the time-varying concave function $V(\x; t)$, and it explicitly shows the linear dependence on the Lipschitz constant $L$ of $V(\x; t)$ as well as  on the bound $D_g$ on the gradient of $V(\x; t)$. The term $G_T$ is also due to the time variation of $V(\x; t)$. 

The term $O^*(1)$ emerges from the approximation error in the step {\bf [S2]} of the algorithm and it is key in our analysis; that is, \emph{because only a limited number of algorithmic steps $\mathcal{M}$ are performed}, instead of running the optimization algorithm to convergence. It is also due to the fact that the function $\hat{U}_n$ changes every time a new measurement is collected. Define the learning rate error $\ell_n$ as
$$
\ell_n = \max_{\x \in D} |\hat{U}_n(\x) - \hat{U}_{n-1}(\x)| .
$$
Then, the error term  $O^*(1)$ comes from the sum
\begin{equation}\label{eq:nl}
 \sum_{n=1}^T \sum_{z=1}^{n-1} \eta^{z} \ell_{n-z+1} \leq \frac{1}{1-\eta} O^*(1) = O^*(1),
\end{equation}
whose proof is deferred in the Appendix. From~\eqref{eq:nl}, we can see that if the step {\bf [S2]} is carried out exactly, $\eta \to 0$, then this term vanishes. On the other hand, if $\eta>0$, then the error is weighted by the changes in the surrogate function $\hat{U}_n(\x)$. Because of  the Bayesian update, $\ell_n$ converges fast enough so that its cumulative error is constant. 

The following result pertains to squared  exponential  kernels.

\begin{theorem}[Squared exponential kernels]\label{th:Total}
Under the same assumptions of Theorem~\ref{th:Regret}, consider a squared exponential kernel. Then $\gamma_T = O((\log T)^{d+1})$ and the cumulative regret is of the order of $O^*(\sqrt{d T (\log T)^{d+1}}) + O(T)$.
\endstatement
\end{theorem}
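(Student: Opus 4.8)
# Proof Proposal for Theorem 2 (Squared Exponential Kernels)

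The plan is to derive Theorem~\ref{th:Total} as a specialization of Theorem~\ref{th:Regret}. Since Theorem~\ref{th:Regret} already gives the bound $R_T \leq \sqrt{C_1 T \beta_T \gamma_T} + C_2 + O^*(1) + G_T$ with high probability, the only work left is to (i) substitute the known asymptotic estimate of the maximum information gain $\gamma_T$ for the squared exponential (SE) kernel, and (ii) track how this, together with the explicit form of $\beta_T$, propagates into the leading term. The term $G_T$ is already $O(T)$ by the bound $G_T \leq 2\Delta \eta T/(1-\eta)$ stated in the theorem, and $C_2 + O^*(1)$ is a constant (absorbed into $O^*(1)$ asymptotically), so the whole question reduces to estimating $\sqrt{C_1 T \beta_T \gamma_T}$.

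First I would invoke the standard bound on the information gain for the SE kernel: for $D \subseteq \Reals^d$ compact, $\gamma_T = O((\log T)^{d+1})$. This is exactly the estimate established in~\cite{Srinivas2012} (their Theorem~5), and under Assumption~\ref{as.behave} (non-degeneracy of the kernel, which holds for SE kernels by~\cite{Zhu1998}) it applies verbatim here, since $\gamma_T$ depends only on the kernel and the domain, not on the engineering cost $V$ or on the approximate nature of step {\bf [S2]}. Second, I would note from the closed-form expression for $\beta_n$ in Theorem~\ref{th:Regret} that $\beta_T = 2\log(2T^2\pi^2/(3\delta)) + 2d\log(dT^2 b r\sqrt{\log(4da/\delta)}) = O(\log T)$, where the $d$, $b$, $r$, $a$, $\delta$ dependence sits inside constants/polylog factors. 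Combining, $\beta_T \gamma_T = O((\log T)^{d+2})$ — but since the $O^*$ notation suppresses polylog factors and the dimensional dependence is already displayed, the cleanest statement keeps the dominant $\gamma_T$ factor explicit: $T\beta_T\gamma_T = O^*(T(\log T)^{d+1})$ with an extra $d$ from $\beta_T$'s leading $2d\log(\cdot)$ term surviving as a linear-in-$d$ multiplier. Taking the square root gives $\sqrt{C_1 T \beta_T \gamma_T} = O^*(\sqrt{dT(\log T)^{d+1}})$.

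Putting the pieces together: $R_T \leq O^*(\sqrt{dT(\log T)^{d+1}}) + C_2 + O^*(1) + O(T) = O^*(\sqrt{dT(\log T)^{d+1}}) + O(T)$, which is the claimed bound, and it holds on the same high-probability event (probability at least $1-\delta$) as in Theorem~\ref{th:Regret}. I do not expect any genuine obstacle here — the result is essentially a bookkeeping corollary. The only point requiring minor care is confirming that the SE information-gain bound from~\cite{Srinivas2012} is stated for the continuous compact-domain setting (it is, via a covering/discretization argument over $D$), so that Assumption~\ref{as.u} and the compactness of $D$ in Assumption~\ref{as.u} legitimately license its use; and making sure the $d$-dependence is reported consistently between the $\beta_T$ factor and the $\gamma_T$ factor so the final $O^*$ statement matches~\cite[Theorem 2]{Srinivas2012}. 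Everything else — the $O(T)$ drift term, the $O^*(1)$ approximation-and-learning term from \eqref{eq:nl}, and the constant $C_2$ — carries over unchanged from Theorem~\ref{th:Regret}.
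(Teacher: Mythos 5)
Your proposal is correct and follows essentially the same route as the paper: the paper's proof also just invokes the squared-exponential information-gain bound $\gamma_T = O((\log T)^{d+1})$ from~\cite[Theorem~5]{Srinivas2012} and substitutes it into the bound of Theorem~\ref{th:Regret}. Your additional bookkeeping of the $\beta_T = O(d\log T)$ factor and the $O(T)$ drift term is consistent with what the paper leaves implicit.
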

\emph{Proof.} See Appendix~\ref{ap:th12}.

\smallskip

Theorem~\ref{th:Total} is a customized version of Theorem~\ref{th:Regret} for a squared exponential kernel. Other special cases can be derived for other kernels as in~\cite{Srinivas2012}, but are omitted here due to space constraints. In the following, we exemplify  the results of Theorem~\ref{th:Total} for the case where  the algorithmic map $\mathcal{M}$ represents a projected gradient method. 

\subsection{Example: \ALGO~with projected gradient method}
\label{sec:example_gradient}

Consider a projected gradient method, applied to a time-varying problem where Assumption~\ref{as.method.1} holds. Consider further the case where only one step of the projected gradient method can be performed in {\bf [S2]} (i.e., $N_s = 1$); then, denoting as $\Pi_{D}[\mathbf{y}] := \arg \min_{\x \in D} \|\mathbf{y} - \x \|_2$  the projection operator, in this case {\bf [S2]} is replaced with: 

\vspace{.1cm}

\noindent {\bf [S2']} Update  $\x_{k}$ as:
\begin{equation}\label{eq:optimizer:pg}
\hspace{-.2cm} \x_{k} \!=\! \Pi_{D} \!\left[\x_{k-1} \!+\! \alpha\Big(  \nabla_{\x} V(\x_{k-1}; t_{k}) + \nabla_{\x} \hat{U}_n(\x_{k-1})\Big)\!\right]
\end{equation}
and implement $\x_{k}$. 

\vspace{.1cm}

Step {\bf [S2']} encodes a projected gradient method on the function $\varphi_{nk}(\x)$ with stepsize $\alpha$. Note that for the Bayesian framework, the derivative $\nabla_{\x} \hat{U}_n(\x_{k-1})$ is straightforward to compute and no approximation has to be made; this is in contrast with online bandit methods where the gradient is estimated from functional evaluations (see e.g.,~\cite{Flaxman2005,AgarwalXiao10,chen2018bandit} and references therein); this is one of the \arev{strengths} of Bayesian modelling, which however comes at the cost of an increase in computational complexity due to the GP updates. 

Under Assumption~\ref{as.method.1} and with the choice of stepsize $\alpha \leq 1/\Theta$, a suitable extension of the results in~\cite{Karimi2016} (considering the proximal-gradient method with $g$ being the indicator function, see Appendix~\ref{ap:ka}) yields the following convergence result for  the iteration~\eqref{eq:optimizer:pg}: 
\begin{equation}\label{eq:pl}
\varphi_{nk}^* - \varphi_{nk}(\x_k) \leq (1- \alpha \kappa)(\varphi_{nk}^* - \varphi_{nk}(\x_{k-1})).
\end{equation}

In this context, the following corollary is therefore in place. 

\begin{corollary}\textbf{\emph{[\ALGO~with projected gradient method]}}\label{th:pgm}
Consider the modified Step {\bf [S2']} as in \eqref{eq:optimizer:pg}, with $\alpha \leq 1/\Theta$. Under the same assumptions of Theorem~\ref{th:Regret}, but with Assumption~\ref{as.method} replaced with Assumption~\ref{as.method.1} with $c = 1/\alpha$,  Theorem~\ref{th:Regret} holds with the specific value $\eta = 1-\alpha\kappa$.
\endstatement
\end{corollary}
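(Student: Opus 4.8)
The plan is to observe that the corollary is an essentially immediate consequence of Theorem~\ref{th:Regret}: all that has to be done is to verify that, under the stated hypotheses, a single iteration of the modified Step~{\bf [S2']} realizes the linear-contraction property that Assumption~\ref{as.method} asks for, with the explicit constant $\eta = 1-\alpha\kappa$. Everything else in Theorem~\ref{th:Regret} — Assumptions~\ref{as.fun}, \ref{as.u}, \ref{as.tv}, \ref{as.behave}, the prescription of $\beta_n$, the definitions of $R_T$ and $\gamma_T$, and the structure of the bound — is untouched by replacing Assumption~\ref{as.method} with Assumption~\ref{as.method.1}. So the core of the proof is establishing the one-step estimate~\eqref{eq:pl} and then invoking Theorem~\ref{th:Regret} verbatim with this $\eta$.

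To get~\eqref{eq:pl} I would first identify the projected-gradient update~\eqref{eq:optimizer:pg} with the argument of the minimization that defines $\mathcal{D}(\cdot,c)$ in~\eqref{pl:D}. Viewing {\bf [S2']} as a proximal-gradient step applied to $-\varphi_{nk}$ with $g$ the indicator of $D$ and taking $c=1/\alpha$, completion of the square shows that $\Pi_D[\x_{k-1}+\alpha\nabla_{\x}\varphi_{nk}(\x_{k-1})]$ is exactly the $\y\in D$ attaining the minimum in~\eqref{pl:D} at $\x=\x_{k-1}$, so that $\x_k$ coincides with that minimizer and
\begin{equation*}
\mathcal{D}(\x_{k-1},1/\alpha) = -\tfrac{2}{\alpha}\Big[-\nabla_{\x}\varphi_{nk}(\x_{k-1})^{\transp}(\x_k-\x_{k-1}) + \tfrac{1}{2\alpha}\|\x_k-\x_{k-1}\|^2\Big].
\end{equation*}
Next I would apply the descent lemma for the $\Theta$-smooth function $-\varphi_{nk}$ together with $\alpha\leq 1/\Theta$, which gives
\begin{equation*}
\varphi_{nk}(\x_{k-1}) - \varphi_{nk}(\x_k) \leq -\nabla_{\x}\varphi_{nk}(\x_{k-1})^{\transp}(\x_k-\x_{k-1}) + \tfrac{1}{2\alpha}\|\x_k-\x_{k-1}\|^2 = -\tfrac{\alpha}{2}\,\mathcal{D}(\x_{k-1},1/\alpha).
\end{equation*}
Subtracting from $\varphi_{nk}^*$ and using the PL inequality~\eqref{pl:our} with $c=1/\alpha$ (so that $\tfrac{\alpha}{2}\mathcal{D}(\x_{k-1},1/\alpha)\geq \alpha\kappa(\varphi_{nk}^*-\varphi_{nk}(\x_{k-1}))$ and recalling $\varphi_{nk}^*-\varphi_{nk}(\x_{k-1})\geq 0$) yields $\varphi_{nk}^*-\varphi_{nk}(\x_k)\leq(1-\alpha\kappa)(\varphi_{nk}^*-\varphi_{nk}(\x_{k-1}))$, i.e.\ precisely~\eqref{eq:pl}; hence Assumption~\ref{as.method} holds with $\eta = 1-\alpha\kappa<1$. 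With Assumption~\ref{as.method} thus in force and all remaining hypotheses of Theorem~\ref{th:Regret} intact, Theorem~\ref{th:Regret} applies directly, its bound holding with $G_T$ and the $O^*(1)$ term evaluated at $\eta=1-\alpha\kappa$.

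The only delicate point — and the part I expect to require care rather than the rest — is the "suitable extension of~\cite{Karimi2016}" underpinning~\eqref{eq:pl}: one must check that the proximal-PL condition, here written in the $\mathcal{D}$-form~\eqref{pl:D}--\eqref{pl:our}, is the right surrogate in the constrained (projected) setting, and that the descent lemma is legitimately applied along the whole trajectory, which is fine because the projection keeps every iterate in the convex set $D$ on which $-\varphi_{nk}$ is $\Theta$-smooth. If one additionally wants the statement for $N_s>1$ compositions of the map instead of the displayed $N_s=1$ case, one iterates~\eqref{eq:pl} to obtain the rate $\eta^{N_s}=(1-\alpha\kappa)^{N_s}$ and feeds that into Theorem~\ref{th:Regret}. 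Beyond this verification, the corollary is a verbatim appeal to Theorem~\ref{th:Regret}.
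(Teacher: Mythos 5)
Your proposal is correct and follows essentially the same route as the paper's own proof in Appendix~\ref{ap:ka}: rewrite the projected-gradient step as a proximal-gradient step with the indicator of $D$, apply the descent lemma with $\alpha\leq 1/\Theta$, identify the resulting decrease with $-\tfrac{\alpha}{2}\mathcal{D}(\x_{k-1},1/\alpha)$, and invoke the PL inequality \eqref{pl:our} to obtain \eqref{eq:pl} and hence $\eta=1-\alpha\kappa$ in Theorem~\ref{th:Regret}. The only cosmetic difference is that the paper carries out the chain of inequalities on $\psi_{nk}=-\varphi_{nk}$ in minimization form, while you work directly with $\varphi_{nk}$ and make the completion-of-the-square identification of $\x_k$ with the minimizer in \eqref{pl:D} explicit.
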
 
\emph{Proof.} See Appendix~\ref{ap:ka}.

\subsection{Vanishing Changes}

Consider  the case where $\Delta_k$ in Assumption~\ref{as.tv} vanishes in time; that is, $\Delta_k \to 0$ as $k \to \infty$. This case is important when the variations in the engineering cost function eventually vanish (for example, if the cost function is derived by a stationary process that is learned while the algorithm is run, as typically done in online convex optimization~\cite{Shalev-Shwartz2012}). 

\begin{theorem}[Vanishing changes]\label{th:vc}
Under the same assumptions of Theorem~\ref{th:Regret}, if $\Delta_k \to 0$ as $k \to \infty$, then $G_T$ in Theorem~\ref{th:Regret} can be upper bounded by a sublinear function in $T$ and we obtain a no-regret result. 

Furthermore, if $\Delta_k$ decays at least as $O(1/\sqrt{k})$, then the result of Theorem~\ref{th:Regret} on the regret $R_T$ is indistinguishable from the static result of~\cite[Theorem~2]{Srinivas2012} in a $O^*$ sense. 
\endstatement
\end{theorem}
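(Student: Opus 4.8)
The plan is to work directly with the closed-form expression for $G_T$ given in Theorem~\ref{th:Regret} and show that the hypothesis $\Delta_k \to 0$ forces $G_T$ to grow sublinearly, so that after dividing the full regret bound by $T$ every term vanishes.

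First I would rearrange the double sum defining $G_T$. Writing $G_T = 2\sum_{n=1}^T\sum_{z=1}^{n-1}\eta^{z}\Delta_{n-z+1}$, substituting $m=n-z+1$, and then interchanging the (finite) order of summation over $m$ and $n$ gives
\begin{equation*}
G_T = 2\sum_{m=2}^T \Delta_m \sum_{j=1}^{T-m+1}\eta^{j} \;\leq\; \frac{2\eta}{1-\eta}\sum_{m=1}^T \Delta_m,
\end{equation*}
since every inner geometric sum is at most $\eta/(1-\eta)$ (this also recovers the coarse bound $2\Delta\eta T/(1-\eta)$ stated in Theorem~\ref{th:Regret} when $\Delta_m\equiv\Delta$). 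Hence $G_T$ is controlled, up to a constant, by the partial sums of $\{\Delta_m\}$.

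For the first claim I would invoke the Ces\`aro/Stolz lemma: $\Delta_k\to 0$ implies $\frac1T\sum_{m=1}^T\Delta_m\to 0$, i.e.\ $\sum_{m=1}^T\Delta_m=o(T)$, and therefore $G_T=o(T)$, which is a sublinear bound. Plugging this into $R_T \leq \sqrt{C_1 T\beta_T\gamma_T}+C_2+O^*(1)+G_T$ and dividing by $T$: the term $\sqrt{C_1\beta_T\gamma_T/T}$ tends to zero because $\beta_T$ is polylogarithmic and $\gamma_T$ is sublinear for the kernels under consideration (cf.\ Theorem~\ref{th:Total}), the constant $C_2$ and the $O^*(1)$ term obviously vanish when divided by $T$, and $G_T/T\to 0$ by the Ces\`aro estimate. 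Thus $R_T/T\to 0$ with probability at least $1-\delta$, which is the no-regret statement.

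For the second claim, if $\Delta_k\leq c/\sqrt{k}$ eventually, then $\sum_{m=1}^T\Delta_m\leq c'+c\sum_{m=1}^T m^{-1/2}\leq c'+2c\sqrt{T}=O(\sqrt{T})$, so $G_T=O(\sqrt{T})$. Since $\gamma_T$ is bounded away from zero (and grows polylogarithmically for the squared exponential kernel), $\sqrt{T}=O^*(\sqrt{T\gamma_T})$, so $G_T$ is absorbed into the leading $O^*(\sqrt{T\beta_T\gamma_T})$ term, while $C_2$ and $O^*(1)$ are lower order; the bound of Theorem~\ref{th:Regret} then collapses to $R_T=O^*(\sqrt{T\beta_T\gamma_T})$, which is precisely the form of~\cite[Theorem~2]{Srinivas2012} up to polylog factors. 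The only step requiring a little care is the passage from $\Delta_k\to 0$ to $\sum_{m=1}^T\Delta_m=o(T)$ without monotonicity of $\Delta_k$; this is dispatched by splitting the sum at an index beyond which $\Delta_k$ is uniformly small, and the rest is bookkeeping on geometric series and comparison of growth orders.
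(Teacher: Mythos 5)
Your proposal is correct and follows essentially the same route as the paper: both arguments reindex the double sum so that $G_T$ is controlled by $\frac{2}{1-\eta}$ times the partial sums of $\{\Delta_m\}$, conclude $G_T=o(T)$ from $\Delta_k\to 0$ (your explicit Ces\`aro step makes rigorous what the paper states directly), and for the $O(1/\sqrt{k})$ case bound $\sum_m \Delta_m = O(\sqrt{T})$ so that $G_T$ is absorbed into the leading $\sqrt{C_1 T\beta_T\gamma_T}$ term. No gaps.
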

\emph{Proof.} See Appendix~\ref{ap:th3}.




\section{Numerical evaluation}\label{sec:num}

This section considers an  example of application of the proposed framework in a vehicle platooning problem, \rev{whereby vehicles are controlled to maintain a given reference distance between each other and follow a leader vehicle}, inspired by~\cite{monteil2018,Stuedli2018,monteil2019}, and it provides illustrative numerical results. This problem includes all the modeling elements discussed in the paper, and its real-time implementation requirements are aligned with the design principles of the proposed framework.   

Consider then $m+1$ automated vehicles that are grouped in a platoon. The leading vehicle is labeled as $0$, while the vehicles following the leading one are indexed with increasing numbers from $1$ to $m$. The platoon leading and desired velocity is $v(t)$, while the inter-vehicle distances are denoted as $d_i(t)$ for $i = 1, \ldots, m$ (cf.~Fig.~\ref{fig.0}). Consider the problem of deciding which are the best inter-vehicle distances such that they are as close as possible to some desired values that are  dictated by road, aerodynamics considerations, and platoon's speed,  while being \emph{comfortable} to the car riders; e.g. the automated vehicles distances are not too different than the distances that  users would naturally adopt in human-driven vehicles.

Let $\x := [d_1, \ldots, d_m]^\transp$, and denote as $\bar{\x}$ the time-varying vector of distances that one would obtain by considering only the engineering cost. Then, the problem considered in this section is of the form:
\begin{equation}\label{prob-platoon}
\textsf{P}(t):\quad \max_{\x \in D}\  - \frac{1}{2}\|\x - \bar{\x}(t)\|^2_{\Q} + \gamma \sum_{i=1}^m U_i(\x),
\end{equation}
where $D$ is a compact set representing allowed distances between vehicles; $U_i$ is the function capturing the  ``comfort'' of user $i$; and,  $\|\y\|^2_{\Q}:= \y^\transp \Q \y$ is the weighted norm based on the positive definite matrix $\Q$. We further consider the case $\gamma  = 1$, and $\Q$ being not diagonal (this way, the decision variables are coupled). We also set $U_i(\x) = U_i(d_i)$; that is, the comfort of the $i$-th user depends only on the distance with the preceding vehicle $i - 1$ (however, more general models can be adopted). \rev{The number of vehicles following the leading one is set to $m=2$.}

\begin{figure}[b!]
\centering
\includegraphics[width = 0.475\textwidth]{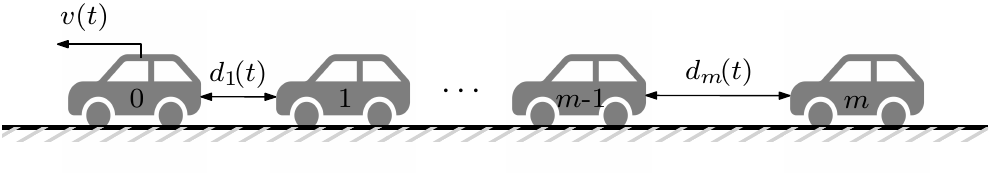}
\caption{\rev{A vehicle platooning example, whereby vehicles $1, \ldots, m$ are controlled to maintain a given time-varying distance between each other $d_i(t)$ and follow the leader vehicle $0$ moving at speed $v(t)$. In the simulation example, $m = 2$. }}
\label{fig.0}
\end{figure} 

 \rev{Functions $U_i(d_i)$ are sample paths from a GP$(0, k(\x, \x'))$ with a squared exponential kernel with length scale parameter $\ell = 1$, i.e.,
$k(\x, \x') = \exp\left(-\frac{1}{2 \ell^2} \|\x-\x'\|^2\right)
$, selected to mimic log-normal functions, and specifically, functions of the form 
$\exp(-\log(d_i)^2/\xi_i^2)/{(\xi_i d_i)} 
$, with $\xi_1 = 0.6$ and $\xi_2 = 0.7$}; the maximizer of $\sum_{i=1}^m U_i(\x)$ does not coincide with the one of the engineering function to avoid a trivial solution. Further, the functions $U_i$'s are different for each vehicle. Using log-normal functions for users' comfort was motivated, see e.g.~\cite{greenberg1966log}, by observing that inter-vehicle times and distances follow log-normal distributions. Intuitively, smaller distances are more critical than larger ones, and there is a given distance  after which the comfort decreases since the users feel that they are too slow relative to the preceding vehicle. In the simulations, we learn functions $U_i$ via a GP with the same kernel. 

It is important to notice that, with the selected parameters, the approximate function $-\varphi_{nk}(d_i)$ is invex, it has Lipschitz continuous gradient, and it also satisfies the Polyak-Lojasiewicz inequality over $D$. With this in mind, we can readily apply a projected gradient method with linear convergence. \rev{Here, we apply one step of projected gradient method per time step $k$.}   

For the numerical tests, we set the probability $\delta$ to $\delta = 0.1$,  and the step size for the gradient method to approximately solve~\eqref{eq:optimizer} is set to $\alpha = 0.1$. The set $D$ is $D = [0,1]^m$ (distances are properly scaled so that the set $[0,1]$ maps to a real distance of $[0,3]$). \rev{We compute the constants $a, b$ of Assumption~\ref{as.u} as described in Appendix~\ref{ap:ab}, and set $a = 1.1, b = 2$, so $\beta_n$ can be determined.} 

The desired distances $\bar{\x}(t)$ are set to $\bar{d}_i = .33 + .25\sin(\pi \omega t)$ for all $i$'s, where $\omega$ is a tunable parameter. The rationale for modeling the $\bar{\x}(t)$ in this way is to capture (1) a stationary case ($\omega = 0$) and a dynamic case where the distances change because of dynamic traffic conditions (and therefore  varying $v(t)$), road changes, etc. Users' feedback comes as a noisy sample of their comfort function, and the noise  is modeled as  a zero-mean Gaussian variable with variance $\sigma = 0.1$. Feedback in this example can come in different ways: it can come at low frequency, if the users hit the break or the accelerator every time they feel too close or too far from the vehicle in front, or it can come at higher frequency, if the users are equipped with heart rate/breathing rate sensors (which can be in smartwatches or incorporated in the seat of the vehicles~\cite{Hao2017}) which may be used as proxies of stress and discomfort. \rev{Here, we assume that the user's feedback is either received within the sampling period (in which case the optimization counter $k$ is the same as the data counter $n$, i.e., $n = k$) or that is received only every $4$ optimization steps (in which case $k = 4n$).}    

\begin{figure}[t!]
\centering
\includegraphics[width = 0.49\textwidth]{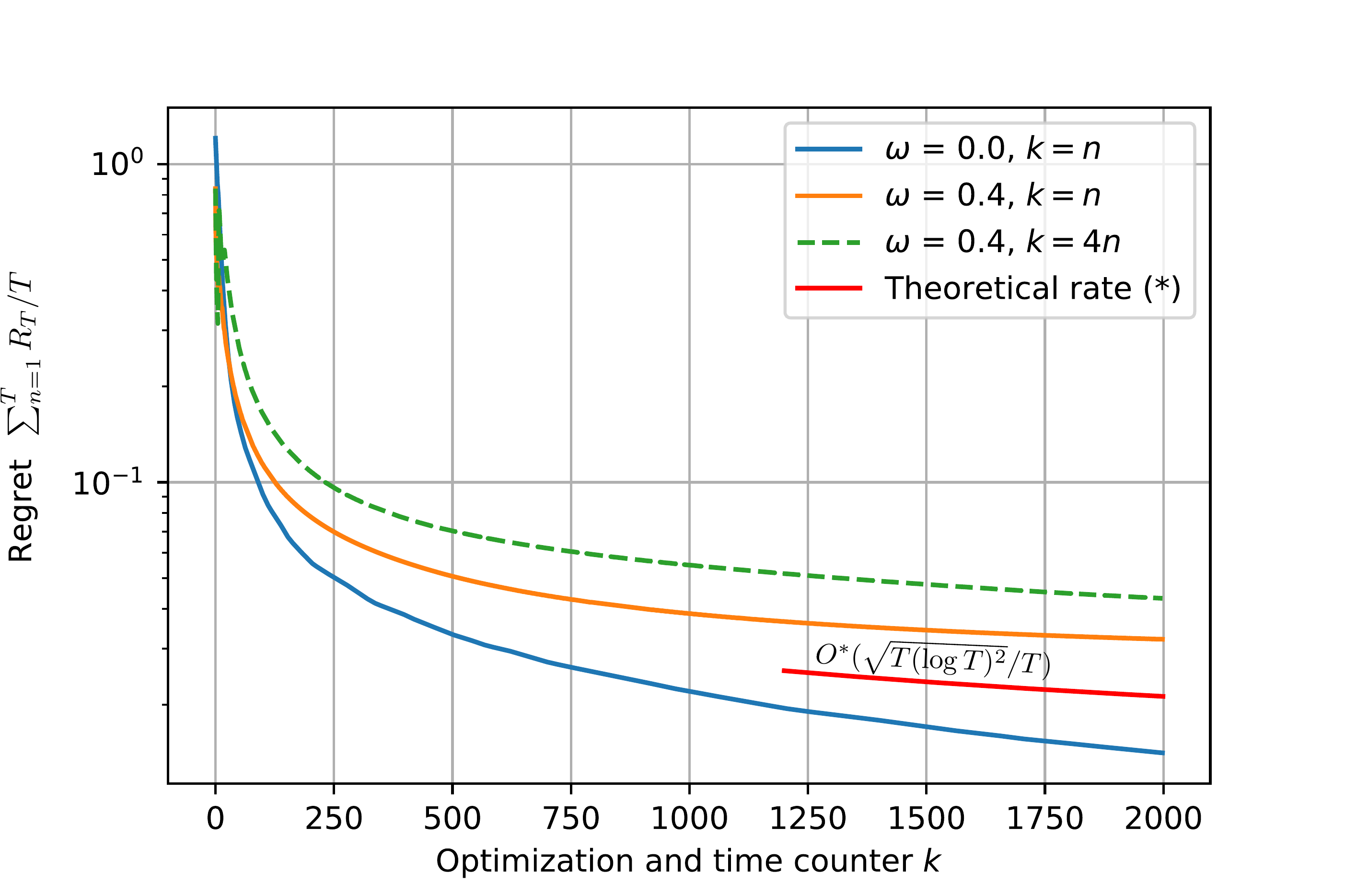}
\caption{\rev{Regret for different $\omega$'s and different number of optimization steps $k$ for each data point $n$. (*) The theoretical rate is for $\omega = 0.0$ and $k = n$. $\delta = 0.1$.} }
\label{fig.1}
\end{figure}  
  
In Figure~\ref{fig.1}, we show the performance of the \ALGO\ algorithm varying $\omega$. On the vertical axis, we plot the regret $\frac{1}{T} R_T$, averaged on \rev{25} different runs of the algorithm. As expected, we observe that when $\omega = 0$, we obtain a no-regret scenario, with $\frac{1}{T} R_T$ eventually going to zero; in this case, it goes to zero  as $O^*(\sqrt{T (\log T)^{2}}/T)$ (note that the learning dimension is $1$ in our example, \rev{and in this case $n = k$}). In the time-varying scenarios instead, there is an asymptotic error bound, thus corroborating the theoretical results. \rev{We also notice that, in the case of $k = 4n$, the algorithm is slower to converge, since the number of feedback points is smaller.}   

\rev{In Figure~\ref{fig.2}, we report how the distance to the platoon leader changes over time for the case $n=k$, $\omega = 0.4$, up to $k = 400$. We notice how the engineering best (Eng. best) for the different following vehicles has larger excursions, while the computed approximate decision variable -- taking into account user's comfort -- is smoother. Roughly speaking, for large distances users feel they are losing the vehicle in front, for smaller distances they feel too close. }

\begin{figure}[t!]
\centering
\includegraphics[width = 0.49\textwidth]{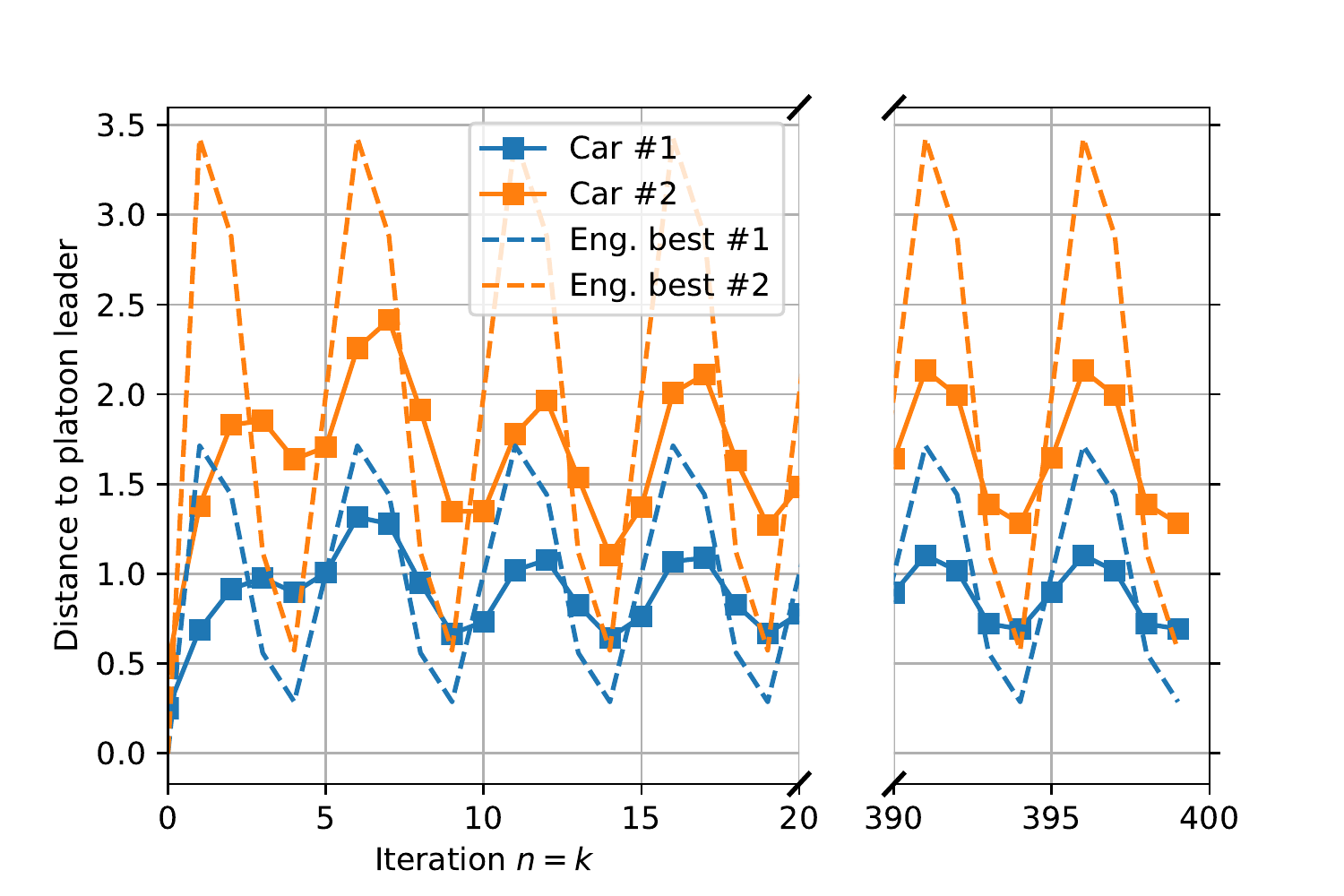}
\caption{\rev{Distance to platoon leader for the considered two following vehicles, in the case $n=k$ and $\omega = 0.4$. } }
\label{fig.2}
\end{figure}  

\rev{In Figure~\ref{fig.3}, we illustrate the learning of user $2$ satisfaction function as more and more feedback comes in, for the case $n=k$, $\omega = 0.4$. In red, we represents the mean, while in green, we represents the 1-$\sigma$ bounds. The grey blobs are the feedback data points. } 

\begin{figure}[t!]
\centering
\includegraphics[width = 0.49\textwidth]{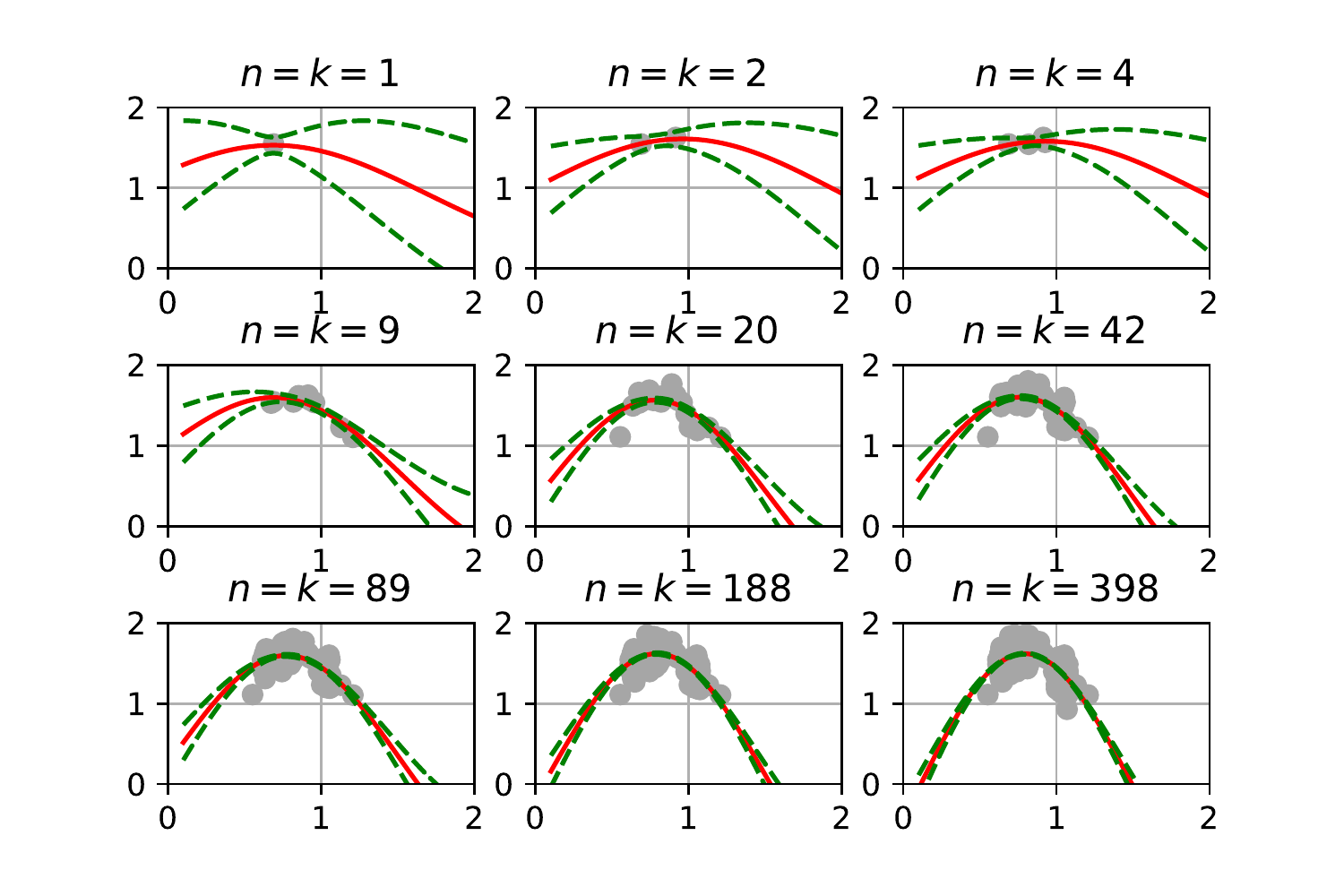}
\caption{\rev{Learning of user $2$ satisfaction function as more and more feedback comes in, for the case $n=k$, $\omega = 0.4$. In red, we represents the mean, while in green, we represents the 1-$\sigma$ bounds. The grey blobs are the feedback data points.}}
\label{fig.3}
\end{figure}

We notice that in this example, the vehicles are supposed to follow the new set-points (i.e., the new inter-vehicle distances) rapidly i.e., faster than \rev{the sampling period} $h$. If this is not the case, the analysis of the algorithm could be extended to include an actuation error;  we plan to investigate this in future research. 


\subsection{Comparison with other approaches}

\arev{
We now compare the proposed algorithm to other possible approaches. In particular, we discuss \emph{(i)} a projected gradient method based on a synthetic one-fits-all model of the user's satisfaction function; \emph{(ii)} a projected zeroth-order method based on the user's feedback at the current and previous point(s) to estimate the gradient of the user's satisfaction function.  

The synthetic model for the  user's satisfaction functions is taken to be $U_i(d_i) = \exp(-\log(d_i)^2/\xi_i^2)/{(\xi_i d_i)}$, with $\xi_i = 0.9$ for both vehicles (to exactly match the parametric model which the GP samples mimic, with a small parameter inaccuracy). We remark that it is in general complicated to obtain accurate one-fits-all models for human preferences, and even small inaccuracies can lead to a bias and a loss of perceived fairness. 

The zeroth-order method utilizes the user's feedback to obtain an estimate of the gradient of the user's satisfaction function at each iteration. We implement both a two-point estimate, where we use the current feedback and the previous one, as well as a four-point estimate, where we use the current feedback and three previous ones. We remark that the implementation of zeroth-order methods, which is in line with~\cite{Duchi2015, Luo2020, Liu2020}, may be in general impractical; in fact, the user may provide feedback infrequently (whereas the zeroth-order method requires feedback at each iteration). While our GP-based method does work seamlessly with intermittent noisy feedback, a zeroth-order method would not.

In Figure~\ref{fig.6}, we see how our \ALGO\ algorithm outperforms the others in terms of average regret. This is reasonable, since the synthetic model has no feedback to learn the true user's satisfaction functions, and the zeroth-order model estimates a noisy gradient (and, likewise, it does not learn the user's functions).

We report in Figure~\ref{fig.7} a metric for the satisfaction of each user. We define this metric as
$$
\textsf{UC}_i(t) := U_i(d_i(t))/ \max_{d\in D} U_i(d) \in [0,1].
$$
The metric $\textsf{UC}_i(t)$, for each user for each time $t$, ``measures'' how satisfied is the user with the current decision variable $d_i(t)$. The results are shown for one run, and they are rolling averages of 5 (for the \ALGO\ algorithm and the synthetic functions) and of 400 (for the zeroth-order algorithms) time points to smooth the signal and capture the overall trend. It can be seen that, not only our \ALGO\ algorithm achieves the best $\textsf{UC}_i(t)$, but it also achieves a similar one for both vehicles. The other competing algorithms create a bias towards one of the vehicles, degrading the perceived fairness and thereby reducing the possible acceptance of the technology~\cite{Linehan2019}. Finally, the zeroth-order methods appear noisy for this example and may generate frustration in a human user.  

\begin{figure}[t!]
\centering
\includegraphics[width = 0.49\textwidth]{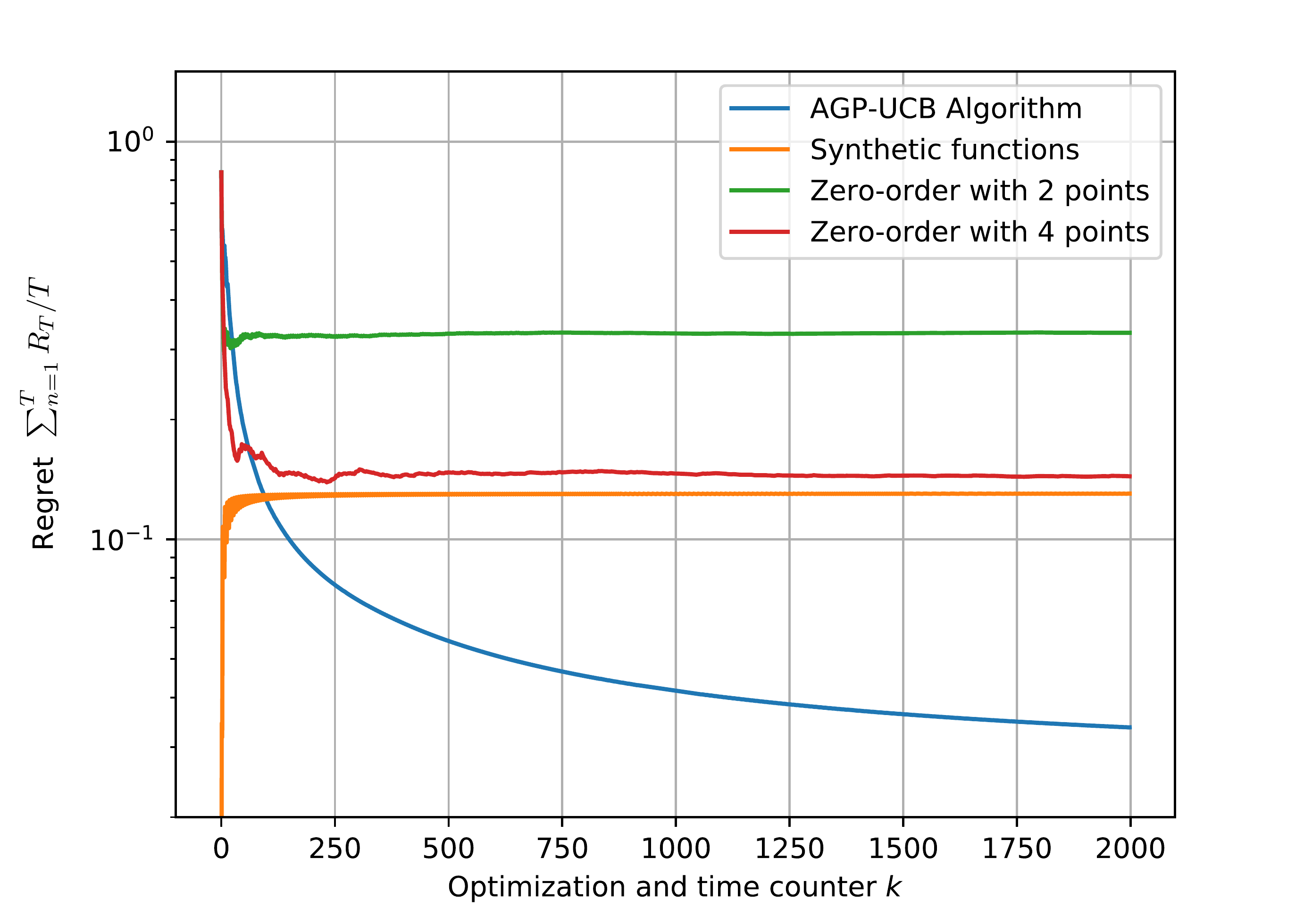}
\caption{\arev{Comparison of the different algorithms in terms of average regret averaged over $25$ runs of the algorithms; $n=k$ and $\omega = 0.4$. } }
\label{fig.6}
\end{figure}  

\begin{figure}[t!]
\centering
\includegraphics[width = 0.49\textwidth, trim=0cm 0cm 0 1cm, clip=on]{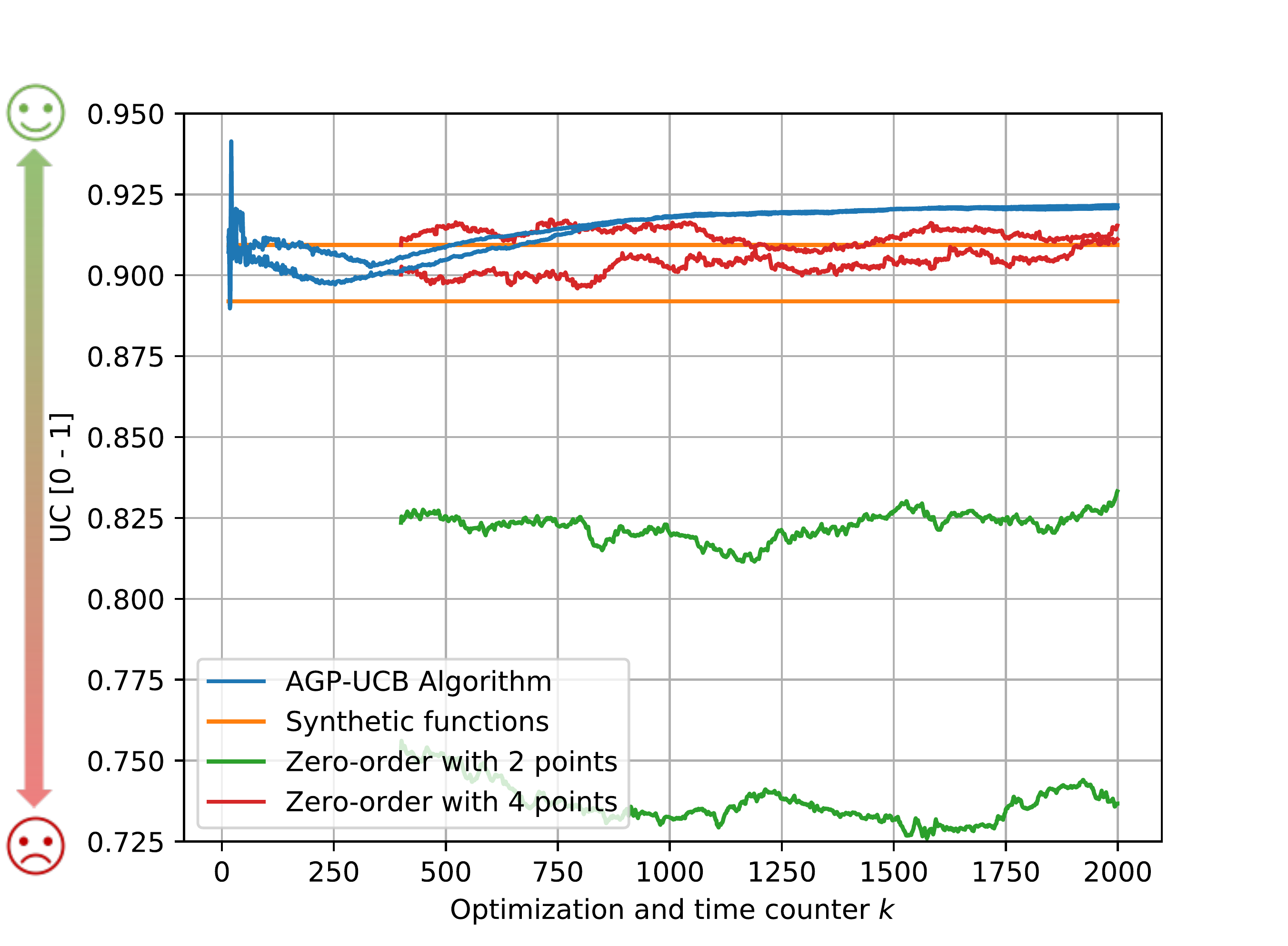}
\caption{\arev{Comparison of the different algorithms in terms of $\textsf{UC}$ (normalized user's safisfaction) for one run of the algorithm; $n=k$ and $\omega = 0.4$. } }
\label{fig.7}
\end{figure}

}

\section{Conclusions}
We developed an online algorithm to solve an optimization problem with a  cost function comprising a time-varying engineering function and a user's  utility function. The algorithm is an approximate upper confidence bound algorithm, and it provably generates a sequence of optimizers that is within a ball of the optimal trajectory, while learning the user's satisfaction function with no-regret. We have illustrated the result with a numerical example derived from vehicle platooning, which offered some additional inspiration for future research. \rev{We believe that our algorithm may offer a wealth of opportunities to devise more ways to integrate users' considerations within the decision making process. }

\vskip3mm

\appendix

\section{Proof of Theorems~\ref{th:Regret}-\ref{th:Total}}\label{ap:th12}
\subsection{Preliminaries}

We start by defining some quantities that will be subsequently  used in the proofs. We rewrite the main algorithmic step \eqref{eq:optimizer} as
\begin{equation}\label{eq:xupdate} 
\x_{n} \approx \arg\max_{\x \in D} \varphi_{nn}(\x) = V(\x; t_n) + \hat{U}_n(\x),
\end{equation}
where we recall that $k=n$; we  simplify the notation from $\varphi_{nn}(\x)$ to $\varphi_{n}(\x)$, since there is no confusion when $k=n$.  

Define the error $e_n$ as the difference between the optimum of the function $\varphi_n(\x)$ and the approximate solution computed via~\eqref{eq:xupdate}, as
\begin{equation}
e_n := \varphi_n^\dag - \varphi_n(\x_{n}).
\end{equation} 
where we use the superscript $\dag$ for  quantities that are related to the optimizer/optimum of $\varphi_n(\x)$, so that the optimum of $\varphi_n(\x)$  is $\varphi_n^\dag = \varphi_n(\x_{n}^\dag)$. 

Define the instantaneous regret $r_n$ as the difference between the optimum of function $f(\x;t_n) = V(\x; t_n) + U(\x)$, which denoted as $f^*_n$, and the approximate optimum $f(\x_{n}; t_n)$ where $\x_{n}$ is computed via~\eqref{eq:xupdate}, as
\begin{equation}
r_n := f_n^* - f(\x_{n}; t_n).
\end{equation} 
Further, define the instantaneous optimal regret $r_n^\dag$ as the difference between $f^*_n$ and the optimum $f(\x_{n}^\dag; t_n)$ where $\x_{n}^\dag$ is the optimizer of~$\varphi_n(\x)$, as
\begin{equation}
r_n^\dag := f_n^* - f(\x_{n}^\dag; t_n).
\end{equation} 

Lemma 5.5 in~\cite{Srinivas2012} is applicable here: pick a $\delta \in (0,1)$ and set $\beta_n = 2 \log(\omega_n/\delta)$, where $\omega_n>0$, $\sum_{n\geq 1} \omega_n^{-1} = 1$. Then, for all $n \geq 1$,
\begin{eqnarray}
|U(\x_{n})-\mu_{n-1}(\x_{n})| &\leq& \sqrt{\beta_n} \sigma_{n-1}(\x_{n}), \label{15}\\
|U(\x_{n}^\dag)-\mu_{n-1}(\x_{n}^\dag)| &\leq& \sqrt{\beta_n} \sigma_{n-1}(\x_{{n}}^\dag). \label{16}
\end{eqnarray}
hold with probability $\geq 1-\delta$. 

Similarly, Lemma~5.6 and Lemma~5.7 of~\cite{Srinivas2012} hold. For Lemma~5.7, we choose a regular discretization $D_n \subset D$, with $D_n \subset [0,r]^d$ and the discretization size $(\tau_n)^d$. Similar to~\cite{Srinivas2012} we have that 
\begin{eqnarray}
\|\x - [\x]_n\|_1 &\leq& r d / \tau_n, \\
\|\x - [\x]_n\|_2 &\leq& r \sqrt{d} /\tau_n, \label{eq-2}\\
\|\x - [\x]_n\|_{\infty} &\leq&  r /\tau_n, \label{eq-inf}
\end{eqnarray}
where $[\x]_n$ is the closest point in $D_n$ to $\x \in D$. We are now able to bound the instantaneous optimal regret $r_n^\dag$.

\begin{lemma}[Extension of Lemma 5.8 of~\cite{Srinivas2012}]\label{lemma:1}
Let Assumptions \ref{as.fun} and \ref{as.u} hold true. Pick a $\delta \in (0,1)$ and define a sequence $\{\omega_n\}_{n\in\mathbf{N}}$ such that $\sum_{n\geq 1} \omega^{-1}_n >0$, $\omega_n >0$. Set the parameter $\beta_n$ as
$$
\beta_{n} = 2\log(4 \omega_{n}/\delta) + 2 d \log(d n^2 b r \sqrt{\log(4 d a /\delta)}),
$$ 
where $a,b,d,r$ are the parameters defined in Assumption~\ref{as.u}. Define the parameters $A_0$ and $A_1$ as
$$
A_0 = \frac{D_g }{b \sqrt{\log(2 d a /\delta)}} + 1,\quad  A_1 = \frac{L}{2 d b^2 {\log(2 d a /\delta)}}.
$$
Then instantaneous optimal regret $r_n^\dag$ is bounded for all $n \geq 1$ as follows:
$$
r_n^\dag \leq 2 \sqrt{\beta_{n}}\, \sigma_{n-1}(\x_{n}^\dag) +A_0/n^2 + A_1/n^4
$$
with probability $\geq 1-\delta$. 
\endstatement
\end{lemma}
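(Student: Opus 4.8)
The plan is to adapt the proof of Lemma~5.8 in~\cite{Srinivas2012}, carrying the extra deterministic term $V(\cdot;t_n)$ through its discretization argument. Write $\x^\circ:=\x^*(t_n)$ for a maximizer of $f(\cdot;t_n)$, so that $f_n^*=V(\x^\circ;t_n)+U(\x^\circ)$, and let $[\x^\circ]_n$ be its closest point in the regular grid $D_n\subset D$ with $\tau_n^d$ points. The structural observation that makes this work is that the \emph{exact} maximizer $\x_n^\dag$ of $\varphi_n(\x)=V(\x;t_n)+\mu_{n-1}(\x)+\sqrt{\beta_n}\,\sigma_{n-1}(\x)$ is a deterministic function of the collected data $A_{n-1},\y_{n-1}$ (since $V(\cdot;t_n)$ is known and $\mu_{n-1},\sigma_{n-1},\beta_n$ are fixed by the data); hence $\x_n^\dag$ plays the role of the query point in~\cite{Srinivas2012}, and the pointwise confidence bound~\eqref{16} applies at $\x_n^\dag$ with no discretization. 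Discretization is therefore needed only at $\x^\circ$.

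I would then establish, on a high-probability ``good event'' specified below, the chain
$$
f_n^* \;\leq\; \varphi_n\!\big([\x^\circ]_n\big) + \rho_n \;\leq\; \varphi_n(\x_n^\dag) + \rho_n \;\leq\; f(\x_n^\dag;t_n) + 2\sqrt{\beta_n}\,\sigma_{n-1}(\x_n^\dag) + \rho_n,
$$
with $\rho_n$ gathering the discretization residuals. The first inequality combines: (i) the GP-sample Lipschitz bound of Assumption~\ref{as.u}, $|U(\x^\circ)-U([\x^\circ]_n)|\leq M\,\|\x^\circ-[\x^\circ]_n\|_1\leq Mrd/\tau_n$ with $M:=b\sqrt{\log(2da/\delta)}$; (ii) the discretized upper confidence bound $U([\x^\circ]_n)\leq \mu_{n-1}([\x^\circ]_n)+\sqrt{\beta_n}\,\sigma_{n-1}([\x^\circ]_n)$, from a union bound over the $\tau_n^d$ points of $D_n$; and (iii) the convexity estimate~\eqref{eq:lip} applied to $\x^\circ$ and $[\x^\circ]_n$, $V(\x^\circ;t_n)\leq V([\x^\circ]_n;t_n)+dD_g r/\tau_n+(L/2)r^2 d/\tau_n^2$. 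Combining (i)--(iii) gives the first inequality with $\rho_n=(D_g+M)dr/\tau_n+(L/2)r^2 d/\tau_n^2$. The second inequality is optimality of $\x_n^\dag$ for $\varphi_n$ over $D\supseteq D_n$. The third applies~\eqref{16} at $\x_n^\dag$, i.e.\ $\mu_{n-1}(\x_n^\dag)\leq U(\x_n^\dag)+\sqrt{\beta_n}\,\sigma_{n-1}(\x_n^\dag)$, together with $V(\x_n^\dag;t_n)+U(\x_n^\dag)=f(\x_n^\dag;t_n)$. Subtracting $f(\x_n^\dag;t_n)$ from the chain yields $r_n^\dag\leq 2\sqrt{\beta_n}\,\sigma_{n-1}(\x_n^\dag)+\rho_n$.

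It then remains to split the failure probability $\delta$ and fix $\tau_n$ so that $\rho_n$ collapses to the stated form. I would allot $\delta/2$ to the GP-Lipschitz event, which forces $M=b\sqrt{\log(2da/\delta)}$ since $da\,\mathrm{e}^{-(M/b)^2}=\delta/2$, and $\delta/4$ each to the two upper-confidence-bound events (the pointwise bound at $\x_n^\dag$ over all $n$, and the uniform bound over $D_n$ over all $n$). This is precisely what the stated $\beta_n=2\log(4\omega_n/\delta)+2d\log\tau_n$ buys with $\tau_n=dn^2 br\sqrt{\log(4da/\delta)}$ and $\sum_n\omega_n^{-1}\leq 1$: then $\tau_n^d\,\mathrm{e}^{-\beta_n/2}=\delta/(4\omega_n)$, so the union bound over $D_n$ is exactly absorbed by the $2d\log\tau_n$ term. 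With this $\tau_n$ one checks $(D_g+M)dr/\tau_n=(D_g+M)/(n^2 b\sqrt{\log(4da/\delta)})\leq(1+D_g/M)/n^2=A_0/n^2$ and $(L/2)r^2 d/\tau_n^2=L/(2dn^4 b^2\log(4da/\delta))\leq A_1/n^4$, which is the claim. I expect the only delicate point to be this bookkeeping --- distributing $\delta$ consistently, and keeping $\log(2da/\delta)$ and $\log(4da/\delta)$ straight so that the residuals come out exactly as $A_0/n^2$ and $A_1/n^4$; the remainder is the standard GP-UCB ``sandwich'' of~\cite{Srinivas2012} together with the convexity estimate~\eqref{eq:lip} from Assumption~\ref{as.fun}.
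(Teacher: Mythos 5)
Your proposal is correct and follows essentially the same route as the paper's proof: discretize only at the true maximizer $\x^*_n$, combine the GP-sample Lipschitz bound, the union-bound UCB over the grid $D_n$, and the smoothness estimate \eqref{eq:lip} on $V$ to get $\varphi_n([\x^*_n]_n)\geq f_n^*-A_0/n^2-A_1/n^4$, then use optimality of $\x_n^\dag$ and the pointwise confidence bound \eqref{16} at $\x_n^\dag$. The only difference is cosmetic — you unpack the content of Lemmas 5.6–5.7 of Srinivas et al.\ explicitly rather than citing them, and your bookkeeping of the $\delta$ splits and of $\tau_n$ reproduces the stated constants $A_0$, $A_1$ correctly.
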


\begin{proof}
The proof is a modification of the one of~\cite[Lemma~5.8]{Srinivas2012}; also in this case we use $\delta/2$ in both Lemmas~5.5 and 5.7 of~\cite{Srinivas2012} (which are valid here for discussion above). We report only the parts that are different. Let $\x^*_n$ be the optimizer of $f(\x; t_n)$, i.e., $f^*_n = f(\x^*_n; t_n)$.

By definition of $\x^\dag_{n}$ and optimality, we have that $\varphi_n(\x^\dag_{n}) \geq \varphi_n([\x^*_n]_{n})$. By~\cite[Lemma~5.7]{Srinivas2012}, we have that $\hat{U}_n([\x^*_n]_{n}) + 1/n^2 \geq U(\x^*_{n})$. These two combined yield 
\begin{multline}\label{20}
\hskip-0.3cm\varphi_n(\x^\dag_{n}) \!=\! V(\x^\dag_{n};t_n) \!+\! \hat{U}_n(\x^\dag_{n}) \geq V([\x^*_n]_{n};t_n) \!+\! \hat{U}_n([\x^*_n]_{n}) \geq\\
 V([\x^*_n]_{n};t_n) + U(\x^*_{n}) - 1/n^2.
\end{multline}
Now, by convexity of $-V(\x;t_n)$ and the Lipschitz condition on its gradient (Assumption~\ref{as.fun} and Eq.~\eqref{eq:lip}), 
\begin{equation}
-V(\x;t_n) + V(\y;t_n) \leq d D_g \|\x-\y\|_{\infty} + \frac{L}{2}\|\x-\y\|^2_2.
\end{equation}
Therefore by \rev{substituting} $\x = [\x^*_n]_{n}$ and $\y = \x^*_{n}$, 
\begin{equation}\label{22}
V([\x^*_n]_{n};t_n) \geq V(\x^*_{n};t_n) - (A_0-1)/n^2 \!- \! A_1/n^4
\end{equation}
where, by the definition of $\tau_n$ in~\cite[Lemma~5.7]{Srinivas2012}  and Eq.s~\eqref{eq-2}-\eqref{eq-inf}, we derive the expressions of $A_0$ and $A_1$ reported in Lemma~\ref{lemma:1}. 

Therefore by~\eqref{20} and \eqref{22}, 
\begin{multline}\label{eq:zibby}
\varphi_n(\x^\dag_{n}) \geq 
V(\x^*_{n};t_n) + {U}_n(\x^*_{n}) - A_0/n^2 - A_1/n^4 \\ = f_n^* - A_0/n^2 - A_1/n^4 =: f_n^* - C
\end{multline}
Putting the above results together, and by using~\eqref{16},
\begin{eqnarray}
r_n^\dag &=& f_n^* - f(\x_{n}^\dag; t_n) \leq  \varphi_n(\x^\dag_{n}) + C - f(\x_{n}^\dag; t_n) \nonumber\\ 
&\leq& V(\x_{n}^\dag; t_n)\!+ \!\mu_{n-1}(\x_{n}^\dag) \!+\! \sqrt{\beta_{n}}\sigma_{n-1}(\x_{n}^\dag)\!+\!  C \!+\!  \nonumber\\ && \quad - V(\x_{n}^\dag; t_n)\!-\! U(\x_{n}^\dag; t_n) \nonumber \\
&\leq& V(\x_{n}^\dag; t_n)\!+\! \sqrt{\beta_{n}}\sigma_{n-1}(\x_{n}^\dag) \!+\!  C \!+\!  \nonumber\\ && \, + |\mu_{n-1}(\x_{n}^\dag)- U(\x_{n}^\dag; t_n)|  -V(\x_{n}^\dag; t_n) \nonumber \\ &\leq& 2 \sqrt{\beta_{n}}\, \sigma_{n-1}(\x_{n}^\dag) +C \label{24},
\end{eqnarray}
from which the claim. 
\end{proof}

\begin{lemma}\label{lemma:regret}
Under the same assumptions and definitions of Lemma~\ref{lemma:1}, the instantaneous regret $r_n$ is bounded for all $n\geq 1$ as follows:
$$
r_n \leq 2 \sqrt{\beta_{n}}\, \sigma_{n-1}(\x_{n}) +A_0/n^2 + A_1/n^4+ e_{n}
$$
with probability $\geq 1-\delta$.
\endstatement
\end{lemma}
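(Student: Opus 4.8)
The plan is to mirror the proof of Lemma~\ref{lemma:1}, but to track the additional slack introduced by using the approximate maximizer $\x_{n}$ of $\varphi_n$ in place of the exact maximizer $\x_{n}^\dag$. The key algebraic identity is that, since $f(\x; t_n) = V(\x; t_n) + U(\x)$ and $\varphi_n(\x) = V(\x; t_n) + \mu_{n-1}(\x) + \sqrt{\beta_{n}}\,\sigma_{n-1}(\x)$, evaluating both at $\x_{n}$ gives
$$
f(\x_{n}; t_n) = \varphi_n(\x_{n}) + \bigl(U(\x_{n}) - \mu_{n-1}(\x_{n})\bigr) - \sqrt{\beta_{n}}\,\sigma_{n-1}(\x_{n}).
$$
First I would apply the confidence bound \eqref{15} (Lemma~5.5 of~\cite{Srinivas2012}, valid here verbatim as in Lemma~\ref{lemma:1}) at the point $\x_{n}$ to obtain $U(\x_{n}) - \mu_{n-1}(\x_{n}) \geq -\sqrt{\beta_{n}}\,\sigma_{n-1}(\x_{n})$, hence $f(\x_{n}; t_n) \geq \varphi_n(\x_{n}) - 2\sqrt{\beta_{n}}\,\sigma_{n-1}(\x_{n})$.

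Next I would invoke the definition of $e_n$ to write $\varphi_n(\x_{n}) = \varphi_n^\dag - e_n$, and then reuse the chain of inequalities \eqref{20}--\eqref{eq:zibby} already established in the proof of Lemma~\ref{lemma:1}, which bound $\varphi_n^\dag = \varphi_n(\x_{n}^\dag) \geq f_n^* - A_0/n^2 - A_1/n^4$ by combining the discretization estimate \cite[Lemma~5.7]{Srinivas2012}, the optimality of $\x_{n}^\dag$ for $\varphi_n$, and the strong-smoothness/convexity bound \eqref{eq:lip} on $-V$ (Assumption~\ref{as.fun}). Chaining these, $f(\x_{n}; t_n) \geq f_n^* - A_0/n^2 - A_1/n^4 - e_n - 2\sqrt{\beta_{n}}\,\sigma_{n-1}(\x_{n})$, and rearranging $r_n = f_n^* - f(\x_{n}; t_n)$ yields the claimed bound.

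The only point requiring care is the probabilistic bookkeeping: the inequality \eqref{15} evaluated at $\x_{n}$ and the discretization bound \cite[Lemma~5.7]{Srinivas2012} each hold on a high-probability event, so — exactly as in the proof of Lemma~\ref{lemma:1} — I would use $\delta/2$ in each and take a union bound to obtain the overall probability $\geq 1-\delta$. I do not anticipate a genuine obstacle here: the statement is essentially Lemma~\ref{lemma:1} with the exact optimizer replaced by the inexact one, and the extra additive term $e_n$ is precisely the optimization gap incurred by that substitution. The substantive work — bounding $\sum_{n} e_n$ using the linear-convergence Assumption~\ref{as.method} together with the fast decay of the surrogate increments $\ell_n$ — is carried out separately later in the appendix.
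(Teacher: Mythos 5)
Your proof is correct and follows essentially the same route as the paper's: the paper's proof of Lemma~\ref{lemma:regret} likewise writes $f_n^* \leq \varphi_n(\x_n) + e_n + C$ by combining \eqref{eq:zibby} with the definition of $e_n$, and then applies the confidence bound \eqref{15} at $\x_n$ to absorb $|\mu_{n-1}(\x_n) - U(\x_n)|$ into a second $\sqrt{\beta_n}\,\sigma_{n-1}(\x_n)$ term, which is exactly your chain read in the opposite direction. The probabilistic bookkeeping ($\delta/2$ in each of Lemmas~5.5 and 5.7 of~\cite{Srinivas2012}) also matches the paper.
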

\begin{proof}
By definition of $e_n$, by calling $C = A_0/n^2 + A_1/n^4$, and by~\eqref{24}, we have 
\begin{eqnarray*}
r_n &=& f_n^* - f(\x_{n}; t_n)\\ &\leq& \varphi_n(\x_{n}) +e_n +C - f(\x_{n}; t_n)\\
&\leq& V(\x_{n}; t_n)+ \sqrt{\beta_{n}}\sigma_{n-1}(\x_{n})+ C+\\ && \quad +e_n +|\mu_{n-1}(\x_{n})- U(\x_{n}; t_n)|  -V(\x_{n}; t_n)\\ &\leq& 2 \sqrt{\beta_{n}}\, \sigma_{n-1}(\x_{n}) +C +e_n,
\end{eqnarray*}
where we have use~\eqref{15} in the last inequality. 
\end{proof}

We now move to the analysis of the error sequence $\{e_n\}_{n \geq 1}$.  

\begin{lemma}\label{lemma:3}
Let Assumptions~\ref{as.fun} \rev{until} \ref{as.method} hold true. Define the learning rate error $\ell_n$ as
$$
\ell_n = \max_{\x \in D} |\hat{U}_n(\x) - \hat{U}_{n-1}(\x)|.
$$
Then, the error sequence $\{e_n\}_{n \geq 1}$ is upper bounded as
\begin{equation*}
e_n \leq  \eta^{n-1} e_1 + \sum_{z=1}^{n-1} 2\, \eta^{z} (\Delta_{n-z+1}+ \ell_{n-z+1}).
\end{equation*}
\endstatement
\end{lemma}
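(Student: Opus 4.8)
The plan is to unroll the recursion that governs the error sequence $\{e_n\}$ by relating $e_n$ to $e_{n-1}$ through one application of the algorithmic map $\mathcal{M}$, and then accounting for the fact that the objective $\varphi_n$ changes between step $n-1$ and step $n$ (because both $V(\cdot;t_n)$ and $\hat U_n$ are updated). First I would write $\x_n = \mathcal{M}(\x_{n-1})$ (the $N_s=1$ case; the general case follows by composing, which only replaces $\eta$ by $\eta^{N_s}$) and invoke Assumption~\ref{as.method} \emph{applied to the current objective} $\varphi_n$: this gives
\begin{equation*}
\varphi_n^\dag - \varphi_n(\x_n) \leq \eta\,(\varphi_n^\dag - \varphi_n(\x_{n-1})).
\end{equation*}
The left-hand side is exactly $e_n$. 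The right-hand side still refers to $\varphi_n$ evaluated at the \emph{previous} iterate $\x_{n-1}$, whereas $e_{n-1} = \varphi_{n-1}^\dag - \varphi_{n-1}(\x_{n-1})$ refers to $\varphi_{n-1}$. So the crux is to control the discrepancy $|\varphi_n^\dag - \varphi_n(\x_{n-1}) - (\varphi_{n-1}^\dag - \varphi_{n-1}(\x_{n-1}))|$ in terms of how much the function moved.

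The key estimate is a perturbation bound: if two functions $\varphi_{n-1}$ and $\varphi_n$ satisfy $\sup_{\x\in D}|\varphi_n(\x)-\varphi_{n-1}(\x)| \leq \rho_n$, then both the optima and the values at any fixed point differ by at most $\rho_n$, so $|(\varphi_n^\dag - \varphi_n(\x_{n-1})) - (\varphi_{n-1}^\dag - \varphi_{n-1}(\x_{n-1}))| \leq 2\rho_n$. Here $\varphi_n(\x) - \varphi_{n-1}(\x) = [V(\x;t_n) - V(\x;t_{n-1})] + [\hat U_n(\x) - \hat U_{n-1}(\x)]$, so by the triangle inequality and the definitions of $\Delta_n$ (Assumption~\ref{as.tv}) and $\ell_n$, we may take $\rho_n = \Delta_n + \ell_n$. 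Combining,
\begin{equation*}
e_n \leq \eta\,(\varphi_n^\dag - \varphi_n(\x_{n-1})) \leq \eta\,(\varphi_{n-1}^\dag - \varphi_{n-1}(\x_{n-1})) + 2\eta\,(\Delta_n + \ell_n) = \eta\, e_{n-1} + 2\eta(\Delta_n + \ell_n).
\end{equation*}
Iterating this one-step recursion from $n$ down to $1$ yields $e_n \leq \eta^{n-1} e_1 + \sum_{z=1}^{n-1} 2\eta^{z}(\Delta_{n-z+1} + \ell_{n-z+1})$, which is exactly the claimed bound (the index shift $n-z+1$ just tracks which time step's increment is multiplied by $\eta^z$).

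The main obstacle I anticipate is making the perturbation step fully rigorous, in particular justifying that a uniform sup-norm bound on $\varphi_n - \varphi_{n-1}$ transfers to a bound on the \emph{optimal values} $\varphi_n^\dag - \varphi_{n-1}^\dag$; this is the standard fact that $|\sup g_1 - \sup g_2| \leq \|g_1 - g_2\|_\infty$, but one must be careful that the maxima are attained (they are, since $D$ is compact and $\varphi$ is continuous — $V$ is strongly smooth hence $C^1$, and the GP posterior mean/variance are smooth under the kernel assumptions). A secondary subtlety is that Assumption~\ref{as.method} is stated as a guarantee for one application of $\mathcal{M}$ to $\varphi_{nk}(\x_{k-1})$, so one must confirm it is legitimately invoked with the post-update function $\varphi_n$ (i.e., the algorithm at step $n$ runs on $\varphi_n$, not $\varphi_{n-1}$) and, if $N_s>1$, that composing $N_s$ contractions gives contraction factor $\eta^{N_s}$ while the drift term is incurred only once per outer step. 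Everything else is routine bookkeeping with geometric sums.
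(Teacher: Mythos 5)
Your proposal is correct and follows essentially the same route as the paper: apply Assumption~\ref{as.method} to $\varphi_n$ at $\x_{n-1}$, decompose $\varphi_n^\dag-\varphi_n(\x_{n-1})$ into $e_{n-1}$ plus the shift in optimal values and the shift at the fixed point $\x_{n-1}$, bound each shift by $\Delta_n+\ell_n$, and unroll the resulting recursion $e_n\leq\eta e_{n-1}+2\eta(\Delta_n+\ell_n)$. The only (immaterial) difference is that you bound the optimal-value shift via the generic inequality $|\sup g_1-\sup g_2|\leq\|g_1-g_2\|_\infty$, while the paper evaluates both functions at $\x_n^\dag$ and uses optimality of $\x_{n-1}^\dag$ — the same estimate either way.
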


\begin{proof}
From the definition of $e_n$ we obtain, 
\begin{equation}
e_n = \varphi^\dag_n - \varphi_{n}(\x_{n}) \leq \eta (\varphi_n^\dag - \varphi_n(\x_{n-1})),
\end{equation}
where the inequality is due to Assumption~\ref{as.method} on the convergence rate of method $\mathcal{M}$. In addition,
\begin{multline*}
\varphi^\dag_n - \varphi_n(\x_{n-1}) = \underbrace{\varphi_n^\dag\!-\!\varphi_{n-1}^\dag}_{(A)} +
\underbrace{\varphi_{n-1}(\x_{n-1})\!-\!\varphi_n(\x_{n-1})}_{(B)} + \\ + e_{n-1}.
\end{multline*}
By algebraic manipulations, the error $(A)$ can be written as,
\begin{equation*}
(A) = (\varphi_n^\dag - \varphi_{n-1}(\x_{n}^\dag))-(\varphi_{n-1}^\dag -\varphi_{n-1}(\x_{n}^\dag)).
\end{equation*}
The first term in the right hand side can be written as
\begin{multline*}
\varphi_n^\dag - \varphi_{n-1}(\x_{n}^\dag) = V(\x_{n}^\dag; t_n) - V(\x_{n}^\dag; t_{n-1}) + \\ \hat{U}_n(\x_{n}^\dag) - \hat{U}_{n-1}(\x_{n}^\dag) \leq \Delta_n + \ell_n.
\end{multline*}
The second term is $-(\varphi_{n-1}^\dag -\varphi_{n-1}(\x_{n}^\dag)) \leq 0$, by optimality.  \rev{Therefore},
$(A) \leq \Delta_n + \ell_n$.
%
%
Furthermore, by Assumption~\ref{as.tv},
\begin{eqnarray*}
(B) &=& V(\x_{n-1}; t_{n-1}) + \hat{U}_{n-1}(\x_{n-1}) + \\ && \qquad\qquad - V(\x_{n-1}; t_{n}) - \hat{U}_{n}(\x_{n-1}) \\ 
&\leq& \Delta_n + \ell_n.
\end{eqnarray*} 

Therefore, a bound on $e_n$ can be derived as
\begin{align*}
e_n &\leq \eta(2 \Delta_n + 2\ell_n  + e_{n-1}) \\
&\!=\! \eta^{n-1} e_1 + \sum_{z=1}^{n-1} 2\, \eta^{z} (\Delta_{n-z+1}+ \ell_{n-z+1}).
\end{align*}
From which the claim follows. 
\end{proof}

In the next lemma, we characterize how the learning rate error $\ell_n$ evolves. 

\begin{lemma}[Cumulative learning rates] \label{lemma:lr} With the same assumptions of Lemma~\ref{lemma:3} and additionally Assumption~\ref{as.behave}, we have that the cumulative learning rate error $\sum_{n=2}^T\ell_n$ is bounded as
$
\sum_{n=2}^T \ell_n \leq O^*(1).
$
\label{lemma:l}
\endstatement
\end{lemma}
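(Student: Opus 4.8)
The plan is to control $\ell_n = \max_{\x \in D} |\hat U_n(\x) - \hat U_{n-1}(\x)|$ by splitting it into the change in the posterior mean and the change in the scaled posterior standard deviation, namely $\ell_n \le \max_{\x}|\mu_n(\x)-\mu_{n-1}(\x)| + \max_{\x}|\sqrt{\beta_n}\sigma_n(\x) - \sqrt{\beta_{n-1}}\sigma_{n-1}(\x)|$, and then to sum each piece separately over $n = 2,\dots,T$. The $\beta_n$ factors grow only polylogarithmically, so they contribute at most an $O^*(1)$ multiplicative overhead and can be pulled out; the heart of the matter is therefore to bound $\sum_n \max_\x |\mu_n(\x)-\mu_{n-1}(\x)|$ and $\sum_n \max_\x |\sigma_n(\x)-\sigma_{n-1}(\x)|$.

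First I would handle the variance term. Since conditioning on one extra data point can only shrink the posterior variance, the differences $\sigma_{n-1}^2(\x) - \sigma_n^2(\x) \ge 0$ telescope: $\sum_{n=2}^T \big(\sigma_{n-1}^2(\x)-\sigma_n^2(\x)\big) = \sigma_1^2(\x)-\sigma_T^2(\x) \le 1$ by the bounded-variance assumption $k(\x,\x)\le 1$. Using $|\sigma_{n-1}(\x)-\sigma_n(\x)| \le \sigma_{n-1}(\x) + \sigma_n(\x) \le \sqrt{\sigma_{n-1}^2(\x)-\sigma_n^2(\x)}/(\text{something})$ is not quite immediate pointwise, so instead I would bound $\sum_n \sigma_{n-1}^2(\x)-\sigma_n^2(\x)$ and relate it to the information gain: standard GP arguments (cf.~\cite{Srinivas2012}) give $\sum_{n=1}^T \sigma_{n-1}^2(\x_n) = O(\gamma_T)$, but here we need a \emph{uniform-in-$\x$} statement. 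This is where Assumption~\ref{as.behave} enters: representing $\mu_n$ and $\sigma_n^2$ in the kernel's generalized Fourier (eigenfunction) basis, the posterior update is, up to the non-degeneracy of the kernel, a kernel ridge regression update whose coefficients converge at the rates tabulated in the RKHS/kernel-regression literature~\cite{Seeger2008,vanderVaart2008,Yang2017}; this yields $\max_\x|\sigma_{n-1}(\x)-\sigma_n(\x)| = O^*(1/n)$ (or faster, depending on the kernel's eigendecay), whose sum over $n$ is $O^*(1)$.

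Next, for the mean term, I would write the Bayesian update in incremental (online) form: adding the datum $(\x_n, y_n)$ changes $\mu$ by a rank-one correction proportional to the innovation $y_n - \mu_{n-1}(\x_n)$ times the (normalized) predictive covariance function $k_{n-1}(\cdot,\x_n)/(\sigma^2+\sigma_{n-1}^2(\x_n))$. The innovation is $O^*(1)$ with high probability (it is Gaussian-tailed by Assumption~\ref{as.u} and the concentration already invoked in~\eqref{15}--\eqref{16}), and $\|k_{n-1}(\cdot,\x_n)\|_\infty \le \sigma_{n-1}(\x_n)$ by Cauchy--Schwarz in the RKHS, so $\max_\x|\mu_n(\x)-\mu_{n-1}(\x)| = O^*(\sigma_{n-1}(\x_n))$. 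Summing, $\sum_{n=2}^T \sigma_{n-1}(\x_n) \le \sqrt{T \sum_{n} \sigma_{n-1}^2(\x_n)} = O^*(\sqrt{T\gamma_T})$ is only sublinear, not $O^*(1)$ — so a cruder bound is not enough, and I must again lean on Assumption~\ref{as.behave}: once the kernel basis is non-degenerate and well-behaved, the posterior mean converges to (a smoothed version of) $U$ at a polynomial rate in $n$, so the \emph{successive} increments $\max_\x|\mu_n-\mu_{n-1}|$ decay like $O^*(1/n)$ and their sum is $O^*(1)$.

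The main obstacle, then, is precisely the passage from the standard \emph{cumulative-at-the-sampled-points} GP bounds (which give $O^*(\sqrt{T\gamma_T})$) to a \emph{uniform-over-$D$} and \emph{per-step-summable} bound on the increments of $\mu_n$ and $\sigma_n$. I expect the proof to invoke Assumption~\ref{as.behave} to expand everything in the kernel eigenbasis, identify the updates with kernel ridge regression, and then cite the known generalized-Fourier convergence rates (\cite{Rasmussen2006,Seeger2008,vanderVaart2008,Yang2017,Zhu1998}) to conclude that each increment is $O^*(1/n)$, whence $\sum_{n=2}^T \ell_n \le \sum_{n=2}^T O^*(1/n) = O^*(\log T) = O^*(1)$ in the paper's polylog-absorbing $O^*$ notation. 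The $\beta_n$ factors, being polylogarithmic, are harmless throughout.
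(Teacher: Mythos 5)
Your overall architecture matches the paper's: split $\ell_n$ into the mean increment and the $\sqrt{\beta}$-weighted standard-deviation increment, absorb the polylogarithmic $\beta$ factors, and invoke Assumption~\ref{as.behave} to expand both quantities in the kernel eigenbasis and identify the posterior updates with kernel ridge regression. You also correctly diagnose that the crude routes (telescoping the variances, or the rank-one innovation bound, which only yields $O^*(\sqrt{T\gamma_T})$) cannot close the argument.

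However, the decisive step is asserted rather than derived, and the justification you offer for it does not hold. You argue that the posterior mean converges to (a smoothed version of) $U$ at a polynomial rate, ``so'' the successive increments decay like $O^*(1/n)$ and are summable up to polylogs. Convergence of $\mu_n$ to $\mu_\infty$ at rate $O(n^{-\alpha})$ gives, via the triangle inequality, only $\max_{\x}|\mu_n(\x)-\mu_{n-1}(\x)| = O(n^{-\alpha})$, and the posterior-contraction rates in the references you lean on (van der Vaart--van Zanten, Yang et al.) are of the form $n^{-\alpha/(2\alpha+d)}$, strictly slower than $1/n$; summing such increments produces a polynomially growing quantity, not $O^*(1)$. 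The paper's proof does not pass through the contraction rate at all. It writes the KRR solution coefficientwise in the eigenbasis as $\zeta_i(n) = \frac{\lambda_i}{\lambda_i+\sigma^2/n}\,\eta_i$ and computes the \emph{increment} directly:
$\zeta_i(n)-\zeta_i(n-1) = \frac{\lambda_i\sigma^2\eta_i}{(n\lambda_i+\sigma^2)((n-1)\lambda_i+\sigma^2)} = O(1/n^2)$.
The mechanism is that the effective regularization parameter $\sigma^2/n$ changes by only $O(1/n^2)$ between consecutive steps, so the increments of the estimator are an order of magnitude smaller than its distance to the limit; this is precisely what makes $\sum_{n}\max_\x|\mu_n(\x)-\mu_{n-1}(\x)| = O(1)$, and the same device handles the variance through the noise-free KRR representation of the posterior covariance together with the identity $|\sigma_n-\sigma_{n-1}| = |\sigma_n^2-\sigma_{n-1}^2|/(\sigma_n+\sigma_{n-1})$, whose denominator is $O(1)$. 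Without this observation (or some substitute for it), your claimed per-step bound of $O^*(1/n)$ is unsupported, and the lemma does not follow from your argument as written.
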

\begin{proof}
First we look at the mean $\mu_n(\x)$ and derive a bound on the difference between $\mu_n(\x)$ and $\mu_{n-1}(\x)$. We proceed in similar fashion for the variance $\sigma_n^2(\x)$. Finally, we put the results together and we prove the lemma. We use the theory of Kernel ridge regression (KRR) estimation to map mean and variance as optimizers of carefully constructed optimization problems over Hilbert spaces, see~\cite{Rasmussen2006,Yang2017}. 

Let the covariance $k(\x,\x')$ be the positive definite kernel associated with the Gaussian process, let $\{\phi_i(\x)\}$ be the eigenfunctions of said kernel $k$ and $\{\lambda_i\}$ be the associated eigenvectors. The posterior mean for GP regression~\eqref{eq:mean} can be obtained as the function which minimizes an appropriately defined functional defined over Hilbert spaces:
\begin{equation}\label{krr}
\mu_n = \arg \min_{\mu\in \mathcal{H}} \Big\{\frac{1}{2} \|\mu\|_{\mathcal{H}}^2 + \frac{1}{2\sigma^2}\sum_{i=1}^n (y_i - \mu(\x_i))^2\Big\},
\end{equation}
where the norm $\|\cdot\|_{\mathcal{H}}$ is the reproducing kernel Hilbert space (RKHS) norm associated with kernel $k$. Under Assumption~\ref{as.behave} the eigenfunctions form a complete orthonormal basis and the true mean can be written as $\mu_{\infty}(\x) = \sum_{i=1}^\infty \eta_i \phi_i(\x)$, and $\mu_{n}(\x) = \sum_{i=1}^\infty \zeta_i \phi_i(\x)$.  By rewriting~\eqref{krr} in terms of the coefficients $\zeta_i$ (see Eq.~(7.5) in~\cite{Rasmussen2006}, with our notation), we arrive at the optimal solution:
\begin{equation}
\zeta_i = \frac{\lambda_i}{\lambda_i + \sigma^2/n} \eta_i.
\end{equation}
The same holds for $\mu_{n-1}(\x)$ for some coefficients $\zeta_i'$. This implies
\begin{equation*}
|\mu_n(\x)-\mu_{n-1}(\x)| = \Big|\sum_{i=1}^\infty \frac{\lambda_i\sigma^2\, \eta_i}{(n\lambda_i + \sigma^2)((n-1)\lambda_i + \sigma^2)}\phi_i(\x)\Big|.
\end{equation*}
Therefore, since $\lambda_i >0$, $|\mu_n(\x)-\mu_{n-1}(\x)| = O(1/n^2)$ and 
%
by looking at the cumulative difference,
\begin{equation}\label{eq:mm}
\sum_{n=2}^T |\mu_n(\x)-\mu_{n-1}(\x)| = O(1).
\end{equation}

We move on to the variance $\sigma^2_n(\x)$. Here we make use of the fact that the variance can be related to the bias of a noise-free KRR estimator (see~\cite{Yang2017}). In particular, define $\tilde{k}_n^{\x'}(\x)$ as $\k_n(\x)^\transp(\K_n + \sigma^2 \I_n)\k_n(\x')$. From~\eqref{eq:cov}, the convergence rate of the variance $\sigma^2_n(\x)$ is the convergence rate of $\tilde{k}_n^{\x'}(\x)$. Fix $\x'$, from the form of $\tilde{k}_n^{\x}(\x')$, one can interpret $k_n(\x_i,\x')$ as noise-free measurements and proceed as in the case of the mean, deriving
\begin{equation}\label{krr1}
\tilde{k}_n^{\x'} = \arg \min_{\kappa\in \mathcal{H}} \Big\{\frac{1}{2} \|\kappa\|_{\mathcal{H}}^2 + \frac{1}{2\sigma^2}\sum_{i=1}^n (k_n(\x_i,\x') - \kappa(\x_i))^2\Big\}.
\end{equation}
Now, by rewriting the problem in terms of the coefficients of the eigenfunction expansion (so that $\tilde{k}^{\x'}_{\infty}(\x) = \sum_{i=1}^\infty \xi_i \phi_i(\x)$, and $\tilde{k}_n^{\x'}(\x) = \sum_{i=1}^\infty \theta_i \phi_i(\x)$), then
\begin{equation}
\theta_i = \frac{\lambda_i}{\lambda_i + \sigma^2/n} \xi_i.
\end{equation}
With this in place,
\begin{multline}\label{eq.den}
|\sigma_n(\x) - \sigma_{n-1}(\x)| = |\sqrt{\sigma_n^2(\x)} - \sqrt{\sigma^2_{n-1}(\x)}| = \\
\Big|\frac{{\sigma_n^2(\x)} - {\sigma^2_{n-1}(\x)}}{\sqrt{\sigma_n^2(\x)} + \sqrt{\sigma^2_{n-1}(\x)}}\Big| = \frac{\big|\tilde{k}_n^{\x}(\x) - \tilde{k}^{\x}_{n-1}(\x)\big|}{\sqrt{\sigma_n^2(\x)} + \sqrt{\sigma^2_{n-1}(\x)}}.
\end{multline}
The numerator of~\eqref{eq.den} can be treated as done for the mean and is $O(1/n^2)$, while the denominator is $O(1)$ as $n \to \infty$, therefore
\begin{equation}\label{eq.kk}
\sum_{n=1}^T  |\sigma_n(\x) - \sigma_{n-1}(\x)| = O(1).
\end{equation}
By \rev{combining}~\eqref{eq:mm} and \eqref{eq.kk}, one obtains
\begin{multline*}
\sum_{n=1}^T  \ell_n \leq \sum_{n=1}^T |\mu_n(\x) - \mu_{n-1}(\x)| + \\ \sqrt{\beta_T} |\sigma_n(\x) - \sigma_{n-1}(\x)| \leq O^*(1),
\end{multline*}
from which the claim follows.
\end{proof}

\subsection{Proof of Theorem~\ref{th:Regret}}
\begin{proof}
By Lemma~\ref{lemma:regret}, we have that with probability greater than $1-\delta$
$$
r_n \leq 2 \sqrt{\beta_{n}}\, \sigma_{n-1}(\x_{n}) +A_0/n^2 + A_1/n^4+ e_{n}
$$
whereas $e_n$ can be bounded by Lemma~\ref{lemma:3} and Lemma~\ref{lemma:l}. The bound on the variance, as derived in Lemma~5.4 of \cite{Srinivas2012} is valid here as well, and in particular with probability greater than $1-\delta$, 
\begin{equation}
\sum_{n=1}^T 4 {\beta_{n}}\, \sigma_{n-1}^2(\x_{n}) \leq {C_1 \beta_T \gamma_T}, \quad \forall T \geq 1,
\end{equation}
with $C_1 = 8/\log(1+\sigma^{-1})$, so that by Cauchy-Schwarz:
\begin{equation}
\sum_{n=1}^T 2 \sqrt{\beta_{n}}\, \sigma_{n-1}(\x_{n}) \leq \sqrt{C_1 T \beta_T \gamma_T}, \quad \forall T \geq 1.
\end{equation}
Therefore, for all $T\geq 1$ 
\begin{equation}
R_T = \sum_{n=1}^T r_n \leq \sqrt{C_1 T \beta_T \gamma_T} + A_0 \frac{\pi^2}{6} + A_1 \frac{\pi^4}{90} + \sum_{n=1}^T e_{n},
\end{equation}
where we have used: $\sum 1/n^2 = \pi^2/6$ and $\sum 1/n^4 = {\pi^4}/{90}$. As in~\cite{Srinivas2012}, we use the crude bound ${\pi^4}/{90} < \pi^2/6 < 2$, and define 
\begin{equation}
C_2 = A_0 \frac{\pi^2}{6} + A_1 \frac{\pi^4}{90} < \frac{2 D_g }{b \sqrt{\log(2 d a /\delta)}}+\frac{2 L}{2 d b^2 {\log(2 d a /\delta)}} + 2,
\end{equation} 
so that,
\begin{equation}
R_T = \sum_{n=1}^T r_n \leq \sqrt{C_1 T \beta_T \gamma_T} + C_2 + \sum_{n=1}^T e_{n},
\end{equation}

As for the error $e_n$ term, by Lemma~\ref{lemma:3} and Lemma~\ref{lemma:l} 
\begin{equation}
\sum_{n=1}^T e_n \leq \frac{e_1}{1-\eta} + 2 \sum_{n=1}^T \sum_{z=1}^{n-1} \eta^z \Delta_{n-z+1} + 2 \sum_{n=1}^T \sum_{z=1}^{n-1} \eta^z \ell_{n-z+1}.
\end{equation}
We bound the right-hand terms as follows
\begin{equation}
\sum_{n=1}^T \sum_{z=1}^{n-1} \eta^z \Delta_{n-z+1} \leq \Delta  \sum_{n=1}^T \eta \frac{1-\eta^{n-1}}{1-\eta} \leq \Delta \eta T/(1-\eta)
\end{equation}
and
\begin{equation}
\sum_{n=1}^T \sum_{z=1}^{n-1} \eta^z \ell_{n-z+1} =  \sum_{n=1}^T \eta^{n}  \sum_{z=1}^{T-n} \ell_{z+1} \leq \frac{1}{1-\eta}\,O^*(1),  
\end{equation}
since for Lemma~\ref{lemma:lr}, $\sum_{n=2}^T \ell_{n} = O^*(1)$; and the claim follows.

Note that the choice of $\beta_n$ in the statement is the same choice of Lemma~\ref{lemma:1} (and subsequent ones), with the special selection of $\omega_n =  n^2 \pi^2/6$. Note that the error term $e_1/(1-\eta)$ is $O(1)$ and it can be incorporated into the $O^*(1)$ term. 
\end{proof}

\subsection{Proof of Theorem~\ref{th:Total}}

\begin{proof}
We use~\rev{\cite{Seeger2008}, or equivalently~\cite[Theorem~5]{Srinivas2012}} to bound the information gain $\gamma_T$ for squared exponential kernels as $\gamma_T = O((\log T)^{d+1})$, from which the claim is derived.
\end{proof}

\section{Proof of Eq.~\eqref{eq:pl}}\label{ap:ka}

\begin{proof}
For completeness and because it is not straightforward, we report here the steps to extend the results of~\cite{Karimi2016} to Eq.~\eqref{eq:pl}. First, we notice that in~\cite{Karimi2016}, unconstrained gradient methods and proximal-gradient methods are considered. Here we are in the proximal-case, with $g(\x)$ representing the indicator function of the compact set $D$. For short-hand notation, we let $\psi_{nk} = -\varphi_{nk}$ and $\nabla = \nabla_{\x}$. In this context, iteration~\eqref{eq:optimizer:pg} can be equivalently written as
\begin{multline}
\!\!\!\!\x_{k} \!=\! \arg\min_{y} \!\left\{ \nabla\psi_{nk}(\x_{k-1})^\transp (\y - \x_{k-1}) +\right. \\  \frac{1}{2 \alpha} \|\y - \x_{k-1} \|^2  + g(\y)\Big\},
\end{multline} 
where $g(\cdot)$ is the indicator function for $D$. Our PL inequality~\eqref{pl:our}, valid for $\x \in D$,  is precisely the same as in~\cite[Eq. (12)-(13)]{Karimi2016} when $g(\cdot)$ is the indicator function; in fact, the condition in~\cite{Karimi2016} pertains to a $\mathcal{D}(\x, c)$ defined as
\begin{multline}\label{their:pl}
\mathcal{D}'(\x, c) := - 2 c \min_{\y} \left\{\nabla\psi_{nk}(\x)^\transp (\y - \x) + \frac{c}{2} \|\y - \x \|^2 \right. \\ \left. + g(\y) - g(\x) \right\}.
\end{multline} 

Now, by using the Lipschitz continuity of the gradient of $\psi_{nk}$, we have that
\begin{eqnarray*}
\psi_{nk}(\x_{k}) &=& \psi_{nk}(\x_{k}) + g(\x_k)\\
&\leq& \psi_{nk}(\x_{k-1}) + \nabla\psi_{nk}(\x_{k-1})^\transp (\x_{k} - \x_{k-1}) +\\ && \qquad\qquad \frac{\Theta}{2}\|\x_{k} - \x_{k-1}\|^2 + g(\x_k) \\
&\leq& \psi_{nk}(\x_{k-1}) + \nabla\psi_{nk}(\x_{k-1})^\transp (\x_{k} - \x_{k-1}) +\\ && \qquad\qquad  \frac{1}{2\alpha}\|\x_{k} - \x_{k-1}\|^2 + g(\x_k) \\
&\leq& \psi_{nk}(\x_{k-1}) -\frac{\alpha}{2} \mathcal{D}(\x_{k-1}, 1/\alpha) \\
&\leq& \psi_{nk}(\x_{k-1}) -\alpha \kappa  (\psi_{nk}(\x_{k-1}) - \psi_{nk}^*).
\end{eqnarray*} 
In particular, we have used the following line of reasoning. First line: $g(\x_k) = 0$, since $\x_k\in D$; second line: Lipschitz property; third line: upper bound true for any $\alpha \leq 1/\Theta$; fourth line: definition of $\mathcal{D}$; fifth line: PL property~\eqref{pl:our}.

By adding and subtracting $\psi_{nk}^*$ to the last inequality and rearranging, the Eq.~\eqref{eq:pl} is proven. 
\end{proof}

\section{Proof of Theorem~\ref{th:vc}}\label{ap:th3}

\begin{proof}
The error term $G_T$ can be written as
\begin{equation}
G_T = 2 \sum_{n=1}^T \sum_{z=1}^{n-1} \eta^z \Delta_{n-z+1} = 2 \sum_{n=1}^T \eta^{n}  \sum_{z=1}^{T-n} \Delta_{z+1}.
\end{equation}
If $\Delta_{k} \to 0$ as $k \to \infty$, then $\sum_{z=1}^{T-n} \Delta_{z+1}$ is sublinear in $T$, i.e., $\sum_{z=1}^{T-n} \Delta_{z+1} = o(T)$. Therefore 
\begin{equation}
G_T \leq \frac{2}{1-\eta} o(T),
\end{equation}
which is sublinear in $T$ and the no-regret claim is proven. 

To obtain a result in terms of regret that is indistinguishable from the static case of\cite[Theorem~2]{Srinivas2012}, $G_T$ needs to scale at worst as $O(\sqrt{T})$, which is now the leading term of the regret. This is the case if $\Delta_{k} \leq O(1/\sqrt{k})$, for which
\begin{equation}
G_T \leq \frac{2}{1-\eta} O(\sqrt{T}),
\end{equation}
which leads to
\begin{equation*}
\mathbf{Pr}\Big\{R_T \leq \sqrt{C_1 T \beta_T \gamma_T} + C_2  + O^*(1)+ O(\sqrt{T}) \Big\} \geq 1-\delta. 
\end{equation*}
Since the term $C_2  + O^*(1)+ O(\sqrt{T})$ grows slower than $\sqrt{C_1 T \beta_T \gamma_T}$ in $T$, it can be omitted in a $O^*$ analysis and the claim is proven. 
\end{proof}

\section{Estimating the constants $a$ and $b$.}\label{ap:ab}

\rev{
The constants $a$ and $b$ in Assumption~\ref{as.u} need to be estimated for different kernels; however, the procedure is rather straightforward and empirical bounds for $a$ and $b$ are not difficult to obtain. In the following, we provide an example of procedure for the squared exponential kernel. 

Consider then the squared exponential kernel
$
k(\x, \x') = \exp\left(-\frac{1}{2 \ell^2} \|\x-\x'\|^2\right)
$
and a Gaussian Process with sample paths $U \sim$GP$(0, k(\x,\x'))$. The derivative is a linear operator, therefore derivatives of GPs are still GPs, and specifically
$$
\frac{\partial U}{\partial x_j} \sim \textrm{GP}(0, k'_j(\x,\x')), \, k_j'(\x,\x') = \frac{\partial}{\partial {x}_j}\frac{\partial}{\partial {x}'_j} k(\mathbold{x}, \mathbold{x}'),
$$ 
where the subscript $j$ stands for the $j$-th component. For a squared exponential kernel, then 
$$
k'_j(\x,\x') = 
k(\x,\x')\left[1 - (x_j-x_j')^2/{\ell^2}  \right]/{\ell^2},
$$
see for example \cite{Solak2003}, and in particular, 
$
k'_j(\x,\x) = {1}/{\ell^2}.
$

Now, take any GP, say again $U \sim $GP$(0, k(\x,\x'))$. For any given $\epsilon>0$ there exists a constant $C_{\epsilon}>0$, such that any sample path $U(\x)$ will be bounded by the following: 
$$
\textbf{Pr}\left\{\sup_{\x \in D} |U(\x)| > M \right\} \leq C_{\epsilon} \exp\left(- \frac{1}{2} \frac{M^2}{ \sigma^2+\epsilon}\right).
$$
where $\sigma^2 = \sup_{\x \in D} k(\x,\x)$, see~\cite[Eq.~(2.33)]{Azaies2009}, or equivalently~\cite[Theorem~5]{Ghosal2006}. Therefore, for the derivative (if exists and bounded as in the squared exponential case), 
$$
\textbf{Pr}\left\{\sup_{\x \in D} \left|\frac{\partial U}{\partial x_j}\right| > M \right\} \leq C_{\epsilon} \exp(- M^2 / (2/\ell^2 + 2\epsilon)).
$$
Comparing this with the Assumption~\ref{as.u}, implies that $b = \sqrt{(2/l^2 + 2\epsilon)}$ and $a = C_{\epsilon}$.

At this point, take $b = \sqrt{(2/\ell^2 + 2\epsilon)}$. In our experiment simulations $\ell = 1$ and we take $\epsilon = 1$ as well, so that $b=2$. 

Compute $C_{\epsilon}$ empirically, at the beginning, by drawing sample paths of the derivative and checking their suprema for different level of $M$ and their empirical frequency. In our case $C_{\epsilon} \approx 1.005$ and we set $a = 1.1$ to be on the safe side. Note that theoretical bounds on $C_{\epsilon}$ exist, but they may be loose, so computing $C_{\epsilon}$ empirically is a good strategy. }

\arev{As discussed, while the value for $b$ is imposed, the value of $a$ is estimated, and in general the estimated $\hat{a}\leq a = C_{\epsilon}$. We see next why this is not an issue.}

\section{Error in the estimate of $a$}

\arev{
Suppose now that the estimated $\hat{a} \leq a$ (this without loss of generality). Given the bound $\hat{a}$, one can pick $\beta_n$ in Theorem~\ref{th:Regret}, as 
$$
\hat{\beta}_{n} = 2 \log(2 n^2 \!\pi^2 \!/(3\delta)) + 2 d \log(d n^2 b r \sqrt{\log(4 d \hat{a} /\delta)}),
$$ 
and this is the only modification for our algorithm. For the convergence analysis, this new $\hat{\beta}_{n}$ changes our Lemma~\ref{lemma:1}. In particular, $A_0$ and $A_1$ are modified. By following closely the proof of Lemma~\ref{lemma:1}, and of \cite[Lemma~5.7]{Srinivas2012}, we can define a discretization size $\hat{\tau}_n = d n^2 b r \sqrt{ \log(2 d \hat{a}/\delta)}$, and by algebraic computations, the new $A_0$ and $A_1$ with the modified $\hat{\beta}_{n}$ read,
\footnotesize
$$
A_0 = \frac{D_g }{b \sqrt{\log(2 d \hat{a} /\delta)}} + \frac{\sqrt{\log(2 d a /\delta)}}{\sqrt{\log(2 d \hat{a} /\delta)}}, \quad  A_1 = \frac{L}{2 d b^2 {\log(2 d \hat{a} /\delta)}}. 
$$ 
\normalsize
Since the new $A_0$ and $A_1$ are larger than the original ones for $\hat{a} \leq a$, then the \emph{constant} term $C_2$ in Theorem~\ref{th:Regret} is larger than the original one. This however does not change the asymptotics of the average regret. 

In addition, in Theorem~\ref{th:Regret}, the term containing $\beta_T$ would be substituted with $\hat{\beta}_T$, which is smaller for the case $\hat{a} \leq a$. Once again, this does not change the asymptotics but only a multiplicative constant. 

  
Summarizing, while having a good bound for $a$ helps to obtain a good $\beta_n$, and therefore a good trade-off between mean and covariance (i.e., exploitation and exploration), even rough bounds for $a$ deliver the same asymptotical behavior for the average regret.
}


\bibliographystyle{ieeetr}        
\bibliography{PaperCollection00}

\end{document}